\documentclass{article}
\usepackage{bm}
\usepackage{amsmath}
\usepackage{amsthm}
\usepackage{amssymb}
\usepackage{appendix}
\usepackage{graphicx} 
\usepackage{subcaption}
\usepackage{comment}
\usepackage[colorlinks=true, linkcolor=blue, citecolor=blue, urlcolor=blue]{hyperref}
\usepackage{float}
\usepackage{placeins}
\usepackage{amsfonts}
\usepackage{xcolor}
\usepackage{csquotes}
\usepackage{bm}
\newtheorem{theorem}{Theorem}
\numberwithin{theorem}{section}
\newtheorem{lemma}{Lemma}
\numberwithin{equation}{section}
\usepackage{algorithm}
\usepackage{algorithmic}
\usepackage{xurl}

\DeclareMathOperator*{\argmin}{arg\,min} 
\DeclareMathOperator*{\argmax}{arg\,max}
\usepackage{chngcntr} 
\counterwithin{figure}{section}

\counterwithin{table}{section}

\usepackage{listings}
\usepackage{xcolor} 
\newcommand{\sg}[1]{{\color{red}#1}}
\newcommand{\shuai}[1]{{\color{cyan}#1}}

\title{Uniform-in-time rational approximation of the matrix exponential with real poles}
\author{Stefan G\"uttel\thanks{Department of Mathematics, The University of Manchester, Oxford Road, Manchester, M13\,9PL, United Kingdom, \texttt{stefan.guettel@manchester.ac.uk}} \and Shuai Shao\thanks{Department of Mathematics, The University of Manchester, Oxford Road, Manchester, M13\,9PL, United Kingdom, \texttt{shuai.shao-2@manchester.ac.uk}}}

\begin{document}

\maketitle

\begin{abstract}
  We propose two new approaches for constructing families of rational functions with shared real poles that nearly uniformly approximate the functions $\exp(-tz)$ for  $z\geq 0$ and $t$ in a positive time interval. The first result concerns the case where all real poles coalesce into a single point. With an appropriate choice of a weight function we are able to derive a closed formula for the asymptotically optimal location of such a pole. We then discuss the more general case where all real poles are distinct. Using Zolotarev's construction of certain optimal rational functions, we present a simple algorithm to derive nearly optimal poles efficiently. 
  We analyze the stability of the numerical evaluation of the resulting rational matrix functions in floating-point arithmetic. By controlling the growth of potential ill-conditioning arising from partial fractions, reliable and highly parallelizable exponential propagators are obtained.
\end{abstract}

\section{Introduction}
The efficient numerical approximation of $\exp(-tA)\bm{b}$, where $A \in \mathbb{R}^{N\times N}$ is a symmetric positive semidefinite matrix and $\bm{b} \in \mathbb{R}^N$ is a vector, for all $t$ in a positive time interval $T = [t_\mathrm{min},t_\mathrm{max}]$, is a fundamental problem in science and engineering. Such expressions arise as solutions of linear evolution problems $\bm{u}'(t) + A\bm{u}(t) = 0$ with initial condition $\bm{u}(0) = \bm{b},$ and appear in applications ranging from geophysical modeling~\cite{{borner2015three},{qiu2019block},{borner2025fast}} and chemical kinetics~\cite{{munsky2006finite},{jahnke2007solving}} to network analysis~\cite{{estrada2012physics},{benzi2013ranking}} and exponential integrators~\cite{saad1992analysis,hochbruck2010exponential,guttel2010rational,guttel2013rational}. 

Two primary challenges arise in addressing this problem. First, explicitly computing and storing the matrix exponential $\exp(-tA)$ is computationally prohibitive if $N$ is sufficiently large. Second, constructing a separate approximation for each time point can be costly, especially when solutions are required at a large number of time points in~$T$. 

An effective way to overcome these challenges is to construct a family of \textit{shared-pole} (or \emph{shared-denominator}) rational approximants 
\begin{equation}\label{partial_fraction_form}
    r_{t,n}(z) = \sum_{i=1}^n\frac{\alpha_i(t)}{z-\sigma_i}
\end{equation}
such that 
\begin{equation}\label{approx}
    r_{t,n}(A)\bm{b} \approx \exp(-tA)\bm{b}
\end{equation}
uniformly for all $t \in T$. We refer to $n$ as the degree of approximation. A crucial point is that the poles $\sigma_i$ of $r_{t,n}$ are independent of the time parameter~$t$. Therefore, only $n$ shifted linear systems $(A-\sigma_i I)\bm{x}_i = \bm{b}$ need to be solved in total, regardless of the number of time points $t$ one wants to evaluate for. 

To assess the quality of approximation in~\eqref{approx}, we assume without loss of generality that $\Vert \bm{b}\Vert_2=1$, and bound 
\begin{eqnarray}
   & & \max_{t\in T}\left\| r_{t,n}(A)\bm{b}-\exp(-tA)\bm{b}\right\|_2 \nonumber \\ 
   &\leq& \max_{t\in T}\max_{z \geq 0}\left| r_{t,n}(z)-\exp(-tz)\right| =: E_{n, T}. \label{time_uni_err}
\end{eqnarray}
We refer to $E_{n, T}$ as the \emph{time-uniform error} for the degree $n$ rational approximation in the time domain $T.$ This bound uniformly holds for \emph{any} symmetric positive semidefinite matrix $A$, which leads to mesh-independent exponential integrators when $A$ arises from discretizations of differential operators.

Reliable algorithms for computing shared complex-pole rational approximation are readily available, such as vector fitting~\cite{gustavsen2002rational}, rational Krylov fitting (\mbox{RKFIT})~\cite{berljafa2017rkfit}, the set-valued adaptive Antoulas--Anderson method~\cite{lietaert2022automatic}, and shared-pole Carath{\'e}o\-dory--Fej{\'e}r  approximation~\cite{al2025shared}. Shared-pole approximants with complex poles can also be constructed using contour integration techniques; see~\cite{{lopez2004numerical},{lopez2006spectral},{weideman2007parabolic}}. However, the accuracy of the resulting approximants generally deteriorates rather quickly as one moves away from the time point the contours are optimized for; see, e.g., Fig.~6.2 in~\cite{berljafa2017rkfit} for a direct comparison of contour-derived poles against RKFIT optimized poles. All of the mentioned algorithms can return poles in conjugate pairs in the complex plane, which can be exploited to reduce the number of required linear system solves from $n$ to $\lceil \frac{n}{2} \rceil$. However, the computational cost of solving a complex shifted linear system is typically much higher than that of solving a real-valued linear system. In particular, when all the poles~$\sigma_i$ are negative, the associated linear systems are positive definite. In that case, either the direct solution via Cholesky factorization or an iterative method like preconditioned CG can be used. Therefore, it is highly beneficial to compute rational approximants with shared \emph{real} poles. This problem has received considerably less attention in the literature. Exceptions include the work of Druskin et al.~\cite{druskin2009solution} and our extension of Andersson's approach~\cite{guttel2025concentrated}. From an algorithmic perspective, dictionary-based methods may also be applicable, including approaches based on the rational empirical interpolation method (rEIM)~\cite{li2025new}.



In the recent work~\cite{guttel2025concentrated} we studied  the case where all shared real poles are concentrated; that is, $\sigma_k =\sigma < 0$ for all  $k=1,\ldots, n$. Extending a classical result by Andersson~\cite{andersson1981approximation}, we proposed an optimal location of $\sigma$ that asymptotically minimizes the time-uniform error. The fact that all poles are concentrated at $\sigma$ means that, with direct solvers, only one Cholesky factorization of $A - \sigma I$ needs to be computed and can be reused for all $n$ solves. The determination of $\sigma$ in~\cite{guttel2025concentrated} required the numerical minimization of a two-parameter scalar function. 

As a first main contribution in this work, described in section~\ref{sec:weight}, we will reconsider the concentrated-pole approximation problem but with a weight function. That is, we seek to minimize a weighted version of \eqref{time_uni_err},
\begin{eqnarray}
   \max_{t\in T} \left( w(t)^n \cdot \max_{z \geq 0}\left| r_{t,n}(z)-\exp(-tz)\right| \right)=: E^w_{n, T}. \label{time_uni_err_weighted}
\end{eqnarray}
We show that for any weight function $w(t) = t^\gamma$ with $\gamma \geq 0.1684$, the asymptotically optimal concentrated pole is given by
\[
\sigma^* = \frac{-n}{\sqrt{2}\,t_{\max}}.
\]
That is, if the weight function increases sufficiently fast over $[t_\mathrm{min},t_\mathrm{max}]$, it is fine to optimize only for the right-most time point~$t_\mathrm{max}$. Furthermore, this explicit formula for $\sigma^*$ completely removes the need for numerical optimization (improving on  \cite{guttel2025concentrated}). Weight functions of the form $w(t) = t^\gamma$ arise very naturally in applications such as geophysical electromagnetic modeling~\cite{borner2025fast}, where simulated electromagnetic fields are known to decay geometrically and the relative error of the transient needs to be uniformly small over the simulation time interval.

Next, in section~\ref{sec:zolo}, we turn our attention to the case where the poles of the functions $r_{t,n}$ are real but distinct. Our approach is based on new family of real-pole rational interpolants derived from  Zolotarev's optimal functions. While earlier work by Druskin et al.~\cite{druskin2009solution} proposed an optimal shared real pole selection also based on Zolotarev functions, their work requires $A$ to have eigenvalues in a finite positive spectral interval, with the approximation quality deteriorating with increasing condition number of $A$. By contrast, our pole selection approach works for \emph{any} positive semidefinite matrix~$A$ independent of conditioning. 

There is great benefit in evaluating the action of a rational matrix function on a vector, $r_{t,n}(A)\bm{b}$, in partial fraction form, as each linear solve can be distributed to a dedicated processor. 
It is well known that partial fraction decompositions suffer numerical instability in particular when some poles of the rational function are clustered. This is analyzed  in section~\ref{error_anal}. We show  that the  instability can be controlled, allowing for a practical and reliable numerical integrator with high-level of parallelism. 

In section~\ref{exper} we present numerical experiments  to illustrate our analysis and compare real-pole approximants to other choices of rational families computed by \mbox{RKFIT}~\cite{berljafa2017rkfit}. We conclude in  section~\ref{concl}.

\section{Concentrated real pole approximation with a weight function}\label{sec:weight}
For any $q > 0$, let $\mathcal{R}^n_q$ denote the set of degree $n$ real rational functions with all poles concentrated at $-nq $,  
\[
    \mathcal{R}^n_q = \left\{\frac{p_n(z)}{(z+nq)^n}:p_n \in \mathcal{P}_n\right\}.
\]
Here, $\mathcal{P}_n$ denotes the class of polynomials of degree at most~$n$. Moreover, for each time point $t$, let 
\[
    \rho^t_n(q) = \inf\left\{\sup_{z \geq 0}\left| r_{t,n}(z)-\exp(-tz) \right|: r_{t,n} \in \mathcal{R}^n_q\right\}.
\] 
The time-weighted uniform error~\eqref{time_uni_err_weighted} associated with concentrated poles $-nq$ can then be expressed as
\begin{eqnarray}\label{time_weighted_err_q}
   E^w_{n, T}(q) = \max_{t\in T}  \left( w(t)^n \cdot \rho^t_n(q) \right). 
\end{eqnarray}
Theorem~\ref{thm1} below provides the formula for the asymptotic minimizer of~\eqref{time_weighted_err_q}.
\begin{theorem}\label{thm1}
    Let $t \in T = [t_{\min}, t_{\max}]$ with $0 \leq t_{\min} \leq t_{\max}$ and let $w(t) = t^{\gamma}$ with $\gamma > 0$. Then, 
    \begin{eqnarray}
    \lim_{n \rightarrow +\infty}\left(\argmin_{q > 0} E^w_{n, T}(q)\right) = \argmin_{q > 0} \left(q^{-\gamma} \max_{z \in [qt_{\min}, qt_{\max}]}z^{\gamma}\widetilde{H}(z)\right), \label{formula_min}
    \end{eqnarray}
    where $z = qt$ and 
\[
   \widetilde{H}(z) = \exp \left(\log\left|\frac{\widetilde{z}-1}{\widetilde{z}+1}\right|+z \Re(\widetilde{z}^2)\right),
\]
with $\widetilde{z}$ being the root in $\Im(\zeta) \geq 0$ of the equation
\[
        z(\zeta^3-\zeta)+1 = 0
\]
that has the smallest positive real part.
\end{theorem}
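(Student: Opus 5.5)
The natural starting point is Andersson's asymptotic result for a single time point, in the form extended in~\cite{guttel2025concentrated}. After the change of variables $z=qt$, it gives the $n$-th root asymptotics
\[
\lim_{n\to\infty}\bigl(\rho^t_n(q)\bigr)^{1/n} = \widetilde{H}(qt),
\]
and the limit is locally uniform in $t$ for $t$ in compact subsets of $(0,\infty)$ (and in $q$ in compact subsets of $(0,\infty)$). Here $\widetilde{H}$ is obtained from a saddle-point analysis of the Laplace-type representation of the best approximation error. The relevant saddle point is the root $\widetilde z$ of $z(\zeta^3-\zeta)+1=0$ described in the statement.

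With this in hand, the proof will proceed as follows.

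\textbf{Step 1 (pass to the limit functional).} Take $n$-th roots of the weighted error:
\[
\bigl(E^w_{n,T}(q)\bigr)^{1/n} = \max_{t\in T} t^{\gamma}\,\bigl(\rho^t_n(q)\bigr)^{1/n}.
\]
Since the $n$-th root is monotone, minimizing $E^w_{n,T}$ is equivalent to minimizing this quantity. Using the uniform convergence above, together with the fact that a maximum over a compact set commutes with uniform limits, we get
\[
\bigl(E^w_{n,T}(q)\bigr)^{1/n}\to F(q):=\max_{t\in T} t^\gamma \widetilde H(qt).
\]
Substituting $z=qt$ turns $t^\gamma$ into $q^{-\gamma}z^\gamma$, so $F(q)=q^{-\gamma}\max_{z\in[qt_{\min},qt_{\max}]} z^\gamma\widetilde H(z)$. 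This is exactly the right-hand side of \eqref{formula_min}.

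The case $t_{\min}=0$ needs separate handling. There $t^\gamma\to0$, and $\rho^t_n(q)\le 1$ holds trivially (for instance via the approximant $r=0$ when $t>0$, or the exact approximant $r=1$ when $t=0$). So a neighbourhood of $t=0$ contributes at most $\varepsilon^\gamma$, which is negligible provided $F(q)>0$.

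\textbf{Step 2 (convergence of minimizers).} I will show the following properties of $F$:
\begin{itemize}
\item $F$ is continuous on $(0,\infty)$.
\item $F(q)\to\infty$ or to a value exceeding $\inf F$ as $q\to0^+$. This holds because $\widetilde H(z)\to1$ as $z\to0$, so $F(q)\approx t_{\max}^\gamma$.
\item $F(q)$ is bounded below by the same value as $q\to\infty$. This holds because $\widetilde H$ is bounded away from $0$ for large $z$ in the relevant range, while $q^{-\gamma}z^\gamma=t^\gamma$ stays fixed.
\item The minimizer $q^*$ of $F$ is unique.
\end{itemize}
Granted these, a standard $\Gamma$-convergence/epi-convergence argument applies. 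The functions $f_n=(E^w_{n,T})^{1/n}$ converge to $F$ locally uniformly on $(0,\infty)$. Their minimizers stay in a compact subset of $(0,\infty)$, since the level sets of $F$ are compact there. Hence any limit point $\bar q$ of the minimizers $q_n$ satisfies $F(\bar q)\le F(q)$ for all $q$, so $\bar q=q^*$.

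\textbf{Main obstacle.} The hardest part is establishing \emph{uniformity} in $t$ and $q$ of Andersson's asymptotics, together with the coercivity and uniqueness of $F$ needed in Step 2. My plan for each is as follows.

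\begin{itemize}
\item \emph{Uniformity.} The asymptotics depend on $t,q$ only through $z=qt$. I will therefore re-examine the saddle-point estimate in~\cite{guttel2025concentrated} and check that its error terms are continuous in $z$ on compact sets. The root $\widetilde z(z)$ varies continuously there, as long as the selected root stays simple, which should hold for $z>0$.
\item \emph{Coercivity.} I will study the explicit behaviour of $\widetilde H$ near $0$ and $\infty$, by expanding the root $\widetilde z$ of the cubic in both regimes.
\item \emph{Uniqueness.} If uniqueness is hard to prove, I will state the theorem in the set-valued sense: every limit point of the minimizers lies in the argmin set of $F$.
\end{itemize}
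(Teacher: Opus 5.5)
Your outline matches the paper's proof: Andersson's $n$-th root asymptotics for $\rho^1_n$, the scaling $\rho^t_n(q)=\rho^1_n(qt)$, passing the limit through $\max_{t\in T}$ and $\argmin_{q>0}$, and the substitution $z=qt$. The paper does these interchanges in one chain of equalities, replacing $\rho^t_n(q)$ by $\widetilde H(qt)^n$ and dropping the power $n$. It never addresses uniformity or convergence of minimizers, so your Steps 1--2 try to supply what it omits. The locally uniform version of Andersson's limit is still only asserted in your proposal.

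The genuine gap is in Step~2. Locally uniform convergence $f_n\to F$ on $(0,\infty)$, plus compact sublevel sets of the \emph{limit} $F$, does not confine the minimizers $q_n$ of $f_n$. It says nothing about $f_n$ on $(0,a]\cup[b,\infty)$, where $f_n$ could dip below $\min F$ at points $q_n\to0$ or $q_n\to\infty$. Convergence of minimizers needs equi-coercivity of the sequence, for instance
\[
\liminf_{n\to\infty}\ \inf_{z\in(0,a]\cup[b,\infty)}\rho^1_n(z)^{1/n}>\frac{\min F}{t_{\max}^{\gamma}}
\]
for some $0<a<b$. This suffices because $f_n(q)\ge t_{\max}^{\gamma}\rho^1_n(qt_{\max})^{1/n}$. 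It is a lower bound uniform in $n$ and in $z$ all the way to $0$ and $\infty$, and no pointwise or locally uniform asymptotics can give it. You need either Andersson's estimate made uniform up to the endpoints of the $z$-range, or a direct lower bound on the best approximation error in these two regimes.

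Your coercivity argument for $F$ itself is also off as $q\to\infty$: $\widetilde H$ being bounded away from $0$ is not the relevant property. For $z>3\sqrt3/2$ the root $\widetilde z$ is real with $\widetilde z\sim1/z$. Hence $\log\widetilde H(z)=\log\frac{1-\widetilde z}{1+\widetilde z}+\frac{\widetilde z}{1-\widetilde z^{2}}\to0$, i.e.\ $\widetilde H(z)\to1$ as $z\to\infty$. Combine this with $\widetilde H(z)\to1$ as $z\to0^+$ and $\widetilde H<1$ on $(0,\infty)$. This gives $\lim_{q\to0^+}F(q)=\lim_{q\to\infty}F(q)=t_{\max}^{\gamma}>\min F$, which is the coercivity of the limit you want. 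It also shows the uniform bound above is at least consistent with the limit, since $\widetilde H$ is close to $1>\min F/t_{\max}^{\gamma}$ near both ends.
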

\begin{proof}
    Based on the work of  Andersson~\cite{andersson1981approximation}, we have that for any $q > 0$, 
\[
    \displaystyle\lim_{n\rightarrow +\infty}\rho^1_n(q)^{1/n} = \widetilde{H}(q).
\]
   Using the argument scaling property $\rho^t_n(q) = \rho^1_n(qt)$, we have 
   \begin{align*}
       \lim_{n \rightarrow +\infty}\left(\argmin_{q > 0} E^w_{n, T}(q)\right) &=  \lim_{n \rightarrow +\infty}\left(\argmin_{q > 0} \max_{t\in T}  \left( w(t)^n \cdot \rho^t_n(q) \right)\right) \\ &= \lim_{n \rightarrow +\infty} \left(\argmin_{q > 0} \max_{t\in T}  \left( w(t) \cdot \widetilde{H}(qt)\right)^n\right) \\ &= \argmin_{q > 0} \max_{t\in T}  \left( t^{\gamma} \cdot \widetilde{H}(qt)\right) \\
        &= \argmin_{q > 0} \left(q^{-\gamma} \max_{z \in [qt_{\min}, q_{t_{\max}}]}z^{\gamma}\widetilde{H}(z)\right).
   \end{align*}
\end{proof}

Although the right-hand side of~\eqref{formula_min} is generally obtained via numerical optimization~\cite{guttel2025concentrated}, this optimization step can be bypassed when $\gamma$ exceeds a certain threshold, as shown in Theorem~\ref{0.1684}.
\begin{theorem}\label{0.1684}
For $\gamma\geq 0.1684$, the function $F(z)=z^\gamma \widetilde H(z)$ is monotonically increasing for $z > 0$. In this case, the asymptotic minimizer in Theorem~\ref{thm1} is 
\[
\lim_{n \rightarrow +\infty}\left(\argmin_{q > 0} E^w_{n, T}(q)\right) = \frac{1}{\sqrt{2}\,t_{\max}}.
\]
\end{theorem}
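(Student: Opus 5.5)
Two tasks: show $F(z)=z^\gamma\widetilde H(z)$ is increasing for $\gamma\ge 0.1684$, and then identify the minimizer $q^*=1/(\sqrt2\,t_{\max})$ in Theorem~\ref{thm1}.

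\textbf{Minimizer, assuming monotonicity.} If $F$ is increasing on $(0,\infty)$, the inner maximum in \eqref{formula_min} is attained at the right endpoint $z=qt_{\max}$. The objective becomes
\[
q^{-\gamma}(qt_{\max})^\gamma\widetilde H(qt_{\max})=t_{\max}^\gamma\,\widetilde H(qt_{\max}).
\]
Minimizing over $q$ therefore means minimizing $\widetilde H$ over $z=qt_{\max}>0$. The known Andersson result is that $\widetilde H$ has its unique global minimum at $z=1/\sqrt2$; this is also the $t_{\max}$-only case of \cite{guttel2025concentrated}. At that point $\zeta^3-\zeta+\sqrt2=0$ has the double root $\widetilde z=1/\sqrt2$ on the boundary between the real and complex regimes, and $\widetilde H(1/\sqrt2)=3-2\sqrt2$. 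Hence $q^*=1/(\sqrt2\,t_{\max})$. I would verify the minimum location from the derivative formula below, which vanishes exactly there.

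\textbf{Computing the logarithmic derivative.} Write $\log F=\gamma\log z+h(z)$, where
\[
h(z)=\log\left|\frac{\widetilde z-1}{\widetilde z+1}\right|+z\Re(\widetilde z^2).
\]
Differentiate implicitly, using $\widetilde z'=-(\widetilde z^3-\widetilde z)/\bigl(z(3\widetilde z^2-1)\bigr)=1/\bigl(z^2(3\widetilde z^2-1)\bigr)$ from $z(\widetilde z^3-\widetilde z)=-1$. I expect the $\widetilde z'$ terms to cancel, since $h$ arises as a saddle-point value. This is checked by noting that
\[
\frac{d}{d\zeta}\left[\log\frac{\zeta-1}{\zeta+1}+z\zeta^2\right]=\frac{2}{\zeta^2-1}+2z\zeta
\]
vanishes exactly when $z\zeta(\zeta^2-1)=-1$. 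Hence $h'(z)=\Re(\widetilde z^2)$, and monotonicity of $F$ is equivalent to
\[
\gamma/z+\Re(\widetilde z^2)\ge0,\qquad\text{i.e.}\qquad \gamma\ge -z\Re(\widetilde z^2)=:g(z)\quad\text{for all } z>0.
\]
For $z\le 2/(3\sqrt3)$ the cubic's relevant roots are real. There $\widetilde z$ is the smallest positive real root, and $\widetilde z^2>0$ would make $h'>0$. I would double-check this against the claimed minimum at $1/\sqrt2$ and the exact branch-switch point, which the sign change of $h'$ should locate. For large $z$, $\widetilde z$ is complex, with $\widetilde z^2\approx \pm i z^{-1/2}$-type behavior plus an $O(z^{-1})$ correction. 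This gives $g(z)\to$ a finite limit, and $g$ is bounded.

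\textbf{Main obstacle.} The hard step is bounding $\sup_{z>0}g(z)$ and showing it is about $0.16838<0.1684$. Substitute $s=\widetilde z$ as the parameter, so $z=-1/(s^3-s)$ with $z$ real. This constrains $s$ to the curve $\Im(s^3-s)=0$, namely $s=x+iy$ with $3x^2-y^2=1$. Along this curve,
\[
g=\frac{\Re(s^2)}{s^3-s}=\frac{x^2-y^2}{x(x^2-3y^2-1)}
\]
becomes a one-variable rational function of $x$, using $y^2=3x^2-1$. This gives $g=(1-2x^2)/\bigl(x(2-8x^2)\bigr)$ for $x>1/\sqrt3$ after simplification. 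Maximizing it is calculus: the critical point solves a quadratic in $x^2$. The threshold follows from evaluating there, yielding $0.1684$ after rounding up. Care is needed to confirm the correct branch (the smallest positive real part, $\Im\ge0$) throughout.
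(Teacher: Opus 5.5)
Your argument is correct in its essential chain and takes a cleaner route than the paper.

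\textbf{Comparison with the paper.} The paper works with the negative real root $m$ of the same cubic; your parameter is $x=-m/2$. It differentiates the closed form $\widetilde h(m)$ by the chain rule through $s'(m)$ and $m'(z)$. It treats only $0<z<1/\sqrt2$ (i.e.\ $m<-\sqrt2$) and cites earlier work for $\widetilde H'\ge 0$ on $z\ge 1/\sqrt2$. Your envelope identity $\frac{d}{dz}\log\widetilde H(z)=\Re(\widetilde z^2)$ shows what that computation really is. The paper's $\phi(m)$ simplifies to $\frac{m^2-2}{2m(m^2-1)}=z\Re(\widetilde z^2)=-g(z)$. Your critical point $x^2=(5+\sqrt{17})/8$ is exactly the paper's $(m^*)^2=(5+\sqrt{17})/2$, giving $\sup g\approx 0.168375$. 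Your route buys brevity and self-containedness. It covers all $z>0$ at once, since the real branch gives $g<0$ trivially. And since $h'(z)=1-2x^2$ on the complex branch, it also proves, rather than cites, that $1/\sqrt2$ is the unique minimizer of $\widetilde H$. The paper's route has only the advantage of reusing a formula already established in its predecessor.

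\textbf{Errors in your side claims.} Several statements about the root structure are wrong and should be fixed.
\begin{itemize}
\item Since $\zeta^3-\zeta$ has local minimum $-2/(3\sqrt3)$ at $\zeta=1/\sqrt3$, the equation $\zeta^3-\zeta=-1/z$ has three real roots iff $z\ge 3\sqrt3/2$. So $\widetilde z$ is real for \emph{large} $z$, with $\widetilde z\approx 1/z$ and $g\approx -1/z$. It is non-real for $0<z<3\sqrt3/2$, with $g\approx \tfrac12 z^{1/3}\to 0$ as $z\to 0$. You have this reversed; $2/(3\sqrt3)$ is the threshold for $1/z$, not for $z$.
\item There is no double root at $z=1/\sqrt2$. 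There $\widetilde z=e^{i\pi/4}$, so $\Re(\widetilde z^2)=0$, which is why $h'$ vanishes.
\item $\widetilde H(1/\sqrt2)=\tan(\pi/8)=\sqrt2-1$, not $3-2\sqrt2$.
\item The real/complex switch is at $z=3\sqrt3/2$, with double root $1/\sqrt3$ (i.e.\ $x\downarrow 1/\sqrt3$ on your curve). There $h'=1/3>0$, so it is not located by a sign change of $h'$. The loss of differentiability of $\widetilde z$ at that point is harmless because $F$ is continuous.
\end{itemize}

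None of this invalidates your supremum computation. The curve $3x^2-y^2=1$, $x>1/\sqrt3$, contains every non-real $\widetilde z$, whatever $z$ produces it. However, with your stated picture (real roots, hence $h'>0$, for small $z$), the planned check of the minimizer via $h'$ would yield a contradiction. Use $z=1/(x(8x^2-2))$, which is decreasing in $x$. This gives the correct picture: $h'<0$ on $(0,1/\sqrt2)$ and $h'>0$ on $(1/\sqrt2,\infty)$.
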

\begin{proof}
The derivative of $F$ is given by
\[
F'(z) = z^{\gamma-1} \left( \gamma \widetilde{H}(z) + z \widetilde{H}'(z) \right).
\]
We have shown in~\cite{guttel2025concentrated} that $\widetilde{H}(z)>0$ for $z >0$ and $\widetilde{H}'(z) \geq 0$ for $z \geq 1/\sqrt{2}$. Therefore, we have
$F'(z) > 0$ for $z \geq 1/\sqrt{2}$.

Now, let us show that $F'(z)>0$ for $0<z<1/\sqrt{2}$. Let $G(z) =\gamma \widetilde{H}(z) + z \widetilde{H}'(z)$, it suffices to show that $G(z) > 0$ for $0<z < 1/\sqrt{2}$.
Applying the results of the equations (9) and (10) in Theorem~2.3 from \cite{guttel2025concentrated}, we have
\begin{align}\label{shuai:1}
\widetilde{H}\left(z\right)=\widetilde{h}\left(m\right)=\sqrt{\frac{m+1}{m-1}}\exp\left(\frac{m^2-2}{2m\left(m^2-1\right)}\right),
\end{align}
where $z$ and $m$ are related by
\begin{align}\label{shuai:2}
zm\left(m^2-1\right)=-1.
\end{align}
Therefore,  
\begin{align}\label{shuai:3}
\widetilde{h}'\left(m\right)=\widetilde{h}\left(m\right)\left(s'\left(m\right)-\frac{1}{m^2-1}\right),
\end{align}
where
\begin{align}\label{shuai:4}
s(m)=\frac{m^2-2}{2m\left(m^2-1\right)}.
\end{align}
By taking the implicit derivative with respect to $z$ in~\eqref{shuai:2}, we have
\[
z\left(3m^2-1\right)m'(z)+m\left(m^2-1\right)=0,
\]
and hence
\begin{align}\label{shuai:6}
m'\left(z\right)=\frac{1}{z^2\left(3m^2-1\right)}.
\end{align}
From~\eqref{shuai:1}, we can evaluate
\begin{align}
\widetilde{H}'\left(z\right)=\widetilde{h}'\left(m\right)m'\left(z\right),
\end{align}
with the results in~\eqref{shuai:3} and~\eqref{shuai:6}. Substituting into $G$, we obtain
\[
G\left(z\right)=\widetilde{h}\left(m\right)\left(\gamma+z\left(s'\left(m\right)-\frac{1}{m^2-1}\right)m'\left(z\right)\right).
\]
Recall that we want to show that $G(z)>0$ for $0<z<1/\sqrt2$. Since $\widetilde{h}\left(m\right)>0$, it suffices to show that
\begin{align}\label{shuai:8}
\gamma+z\left(s'\left(m\right)-\frac{1}{m^2-1}\right)m'\left(z\right)>0.
\end{align}
By~\eqref{shuai:2}, this is equivalent to the condition  $m<-\sqrt2$. The inequality~\eqref{shuai:8} is equivalent to
\[
\gamma z+z^2m'\left(z\right)\left(s'\left(m\right)-\frac{1}{m^2-1}\right)>0,
\]
and hence
\[
\frac{-\gamma}{m\left(m^2-1\right)}+\frac{1}{3m^2-1}\left(s'\left(m\right)-\frac{1}{m^2-1}\right)>0.
\]
Substituting $s\left(m\right)$ in~\eqref{shuai:4}, we have to show that
\begin{align}\label{shuai:11}
\phi(m) >-\gamma
\end{align}
for all $m<-\sqrt2$, where
\[
\phi(m) = \frac{m\left(m^2-1\right)}{3m^2-1}\left(\frac{m^4-5m^2+2}{2m^2\left(m^2-1\right)^2}+\frac{1}{m^2-1}\right).
\]
Since 
\[
    \phi'(m)=-\frac{m^4-5m^2+2}{2m^2\left(m^2-1\right)^2},
\]
we have that $\phi(m)$ decreases for $m \in (-\infty, m^*)$ and increases for $m \in [m^*, -\sqrt{2}) $ where $$m^* = -\sqrt{\frac{5+\sqrt{17}}{2}}.$$ Hence, 
\[
    \min_{m<-\sqrt{2}} \phi(m) = \phi(m^*) \approx -0.1684. 
\]
Therefore,~\eqref{shuai:11} holds for all $\gamma \geq 0.1684$. In this case, we apply Theorem~\ref{thm1} and obtain
\begin{align*}
\lim_{n \rightarrow +\infty}\left(\argmin_{q > 0} E^w_{n, T}(q)\right) &= \argmin_{q > 0} \left(q^{-\gamma} \max_{z \in [qt_{\min}, q_{t_{\max}}]}z^{\gamma}\widetilde{H}(z)\right) \\
&= \argmin_{q > 0} \left(t_{\max}^{\gamma} \widetilde{H}(qt_{\max})\right) \\ &= \argmin_{q > 0} \widetilde{H}(qt_{\max}) \\ &= \frac{1}{\sqrt{2}\,t_{\max}}.
\end{align*}
The last equality follows from the fact that $1/\sqrt{2}$ is the unique minimizer of $\widetilde{H}$ (see~\cite{andersson1981approximation} and also~\cite{guttel2025concentrated}). This completes the proof.   

\end{proof}

To summarize, 
Theorem~\ref{0.1684} tells us that, if the weight function increases sufficiently fast over the time interval $[t_\mathrm{min},t_\mathrm{max}]$, it is fine to optimize only for the last time point~$t_\mathrm{max}$, and the asymptotically optimal shared concentrated real pole for minimizing the time-weighted uniform error~\eqref{time_uni_err_weighted} is 
\begin{equation}\label{opt_poles}
\lim_{n \rightarrow +\infty}\sigma^*(n) = \frac{-n}{\sqrt{2}\,t_{\max}}.
\end{equation}

\section{A Zolotarev-based pole selection}\label{sec:zolo}
So far, we have only discussed the case where all real poles of rational approximants are concentrated. While this is a reasonable choice for approximation on short time intervals, its approximation quality deteriorates when the time intervals become longer. This motivates the subsequent approach of using \emph{distinct real poles,} which can achieve a much higher level of accuracy for large time intervals due to the increased flexibility of poles being distributed on the negative real axis. 

We first introduce our main algorithm, which selects a suitable set of real poles that approximately minimizes the time-uniform error $E_{n, T}$ for any degree of approximation $n$ and time interval $T.$ The core ingredient is the Hermite integral formula for the error of rational interpolants, which dates back to Walsh~\cite{walsh1935interpolation}. For each $t\in T,$ let $r_{t,n}(z)$ be the degree $n$ rational interpolant of $f_t(z) := \exp(-tz)$, with interpolation points $\vartheta_i \in [0, +\infty)$ and poles $\sigma_i \in [c,d]<0$.  Then 
\begin{equation}\label{herm_err}
f_t(z) - r_{t,n}(z)
= \frac{s_n(z)}{2\pi i}
\int_{\Gamma} \frac{f_t(\zeta)}{s_n(\zeta)(\zeta - z)} \, d\zeta .
\end{equation}
Here, $\Gamma$ is any contour that encloses the interpolation points  and $s_n$ is a rational nodal function defined as
\begin{equation}\label{nodal_fun}
s_n(z)
= \frac{\prod_{i=1}^n (z - \vartheta_i)}{\prod_{i=1}^n (z - \sigma_i)}.
\end{equation}
The error formula~\eqref{herm_err} indicates that the time-uniform error $E_{n, T}$ can be effectively controlled by bounding the growth of $\vert s_n(z) \vert$ on $z\geq 0.$ This can be achieved by solving a third Zolotarev approximation problem on the condenser $\Xi_1 := [c,d] \cup [0, +\infty)$, which yields a set of interpolation points $\vartheta_i$ and poles $\sigma_i$ that minimizes the ratio
\begin{equation}\label{zolo3}
    \frac{\max_{z\in [0, +\infty)}{\left| s_n(z)\right|}}{\min_{z\in [c,d]}{\left| s_n(z)\right|}}.
\end{equation}

Druskin, Knizhnerman and Zaslavsky~\cite{druskin2009solution} investigated a similar  problem for a symmetric condenser consisting of an interval on the positive real axis and its reflection about the origin. 
In our problem, we can map the original condenser to a symmetric one by a M\"obius transformation and obtain $\vartheta_i$ and $\sigma_i$ using the method proposed in~\cite[section~4]{druskin2009solution}. We then map $\vartheta_i$ and $\sigma_i$ back to the original condenser by an inverse M\"obius transformation. The resulting rational nodal function $s_n$ satisfies  
\begin{equation}\label{constr}
    \max_{z\in [0, +\infty)}{\left| s_n(z)\right|} = 1.
\end{equation}

Algorithm~\ref{algo} details the implementation of this idea. Lines 1 to 2 give the mapping $\psi(z)=(z-\varsigma)/(z-\varrho)$ with specifically chosen $\varsigma$ and $\varrho.$ This maps our condenser $\Xi_1$ to the symmetric condenser $\Xi_2 :=[-\eta_2, -\eta_1] \cup [\eta_1, \eta_2] $ with $\eta_1=-\psi(d)$ and $\eta_2=-\psi(c)$. Lines 3 to 7 follow the procedures proposed in~\cite{druskin2009solution}, applied to the condenser~$\Xi_2$, which yields poles and zeros on $\Xi_2$. Line~8 maps these poles and zeros back to the original condenser $\Xi_1$ via $\psi^{-1}.$


\begin{algorithm}[H]
\caption{Choosing Zolotarev poles in a fixed interval}\label{algo}
    \textbf{Input:}  Interval $[c, d]$ with $c < d <0$, degree of approximation $n\in \mathbb{N}^+$.\\
    \textbf{Output:} Poles $\{\sigma_i\}_{i=1}^n\subset [c,d]$ and zeros $\{\vartheta_i\}_{i=1}^n\subset [0, +\infty)$ of $s_n$ that minimizes~\eqref{zolo3} subject to~\eqref{constr}.
\begin{algorithmic}[1]
        \STATE Compute $\varsigma \gets c+\sqrt{c(c-d)}$ and $\varrho\gets 2c-\varsigma.$
        \STATE Compute $\eta_1\gets (\varsigma-d)/(d-\varrho)$, $\eta_2\gets (\varsigma-c)/(c-\varrho)$, and $\mu \gets 1-(\eta_1/\eta_2)^2.$ 
        \STATE Evaluate the integral $J \gets \int_{0}^{1}{\left((1-x^2)(1-\mu x^2)\right)^{-1/2}dx}.$
        \FOR{$i = 1$ to $n$}
        \STATE Compute $v\gets (2n-2i+1)J/(2n).$
        \STATE Find $\theta$ such that $\int_0^\theta \left(1-\mu\sin^2{\zeta}\right)^{-1/2}d\zeta = v.$
        \STATE Obtain $\vartheta_i \gets \eta_2(1-\mu\sin^2{\theta})^{1/2}.$
        \STATE Obtain $\sigma_i \gets (\varsigma+\varrho \vartheta_i)/(1+\vartheta_i)$ and $\vartheta_i \gets (\varsigma-\varrho \vartheta_i)/(1-\vartheta_i).$
        \ENDFOR
\end{algorithmic}
\end{algorithm}
It remains choosing an optimal pole interval $[c,d]$. According to classical results by Andersson~\cite{andersson1981approximation} and Borwein~\cite{borwein1983rational}, the asymptotically optimal real poles of the degree~$n$ rational approximation of $f_t(z) = \exp(-tz)$ on $z\geq 0$ should be concentrated at $-n/(\sqrt{2}t)$ for any $t > 0$. Therefore, a suitable choice initial guess for a pole interval is $[c_0, d_0] = [-n/(\sqrt{2}\,t_{\min}), -n/(\sqrt{2}\,t_{\max})]$ where $T = [t_{\min}, t_{\max}].$ We then refine this  pole interval $[c,d]$ by numerically solving a two-parameter optimization problem:
\begin{equation}\label{minimize}
    \mbox{Find } (c^*, d^*) = \argmin_{(c,d)\in \Omega}\max_{t\in T}\max_{z\in Z}\left| f_t(z)-r^{(c,d)}_{t,n}(z)\right|, 
\end{equation}
where $\Omega := \{(c,d)\in\mathbb{R}^2 : c<d<0\}$ is the feasible set, $Z:=[z_1, z_2, \ldots, z_m]^T$ is a vector of $m \gg n$ sufficiently dense sample points on the nonnegative real axis, and $r^{(c,d)}_{t,n}$ is a degree $n$ rational interpolant of $f_t$ with poles $\sigma_i \in [c,d]$ and interpolation points $\vartheta_i\in[0,+\infty)$ obtained from Algorithm~\ref{algo}. To enforce the interpolatory condition, we compute the vector of residues $\bm{\alpha}(t):=[\alpha_1(t),\; \cdots\; , \alpha_n(t)]^T$ of $r^{(c,d)}_{t,n}$ by solving a linear system 
\begin{equation}\label{linsys}
    C\bm{\alpha}(t) = \bm{f}(t)
\end{equation}
involving a Cauchy matrix $C\in \mathbb{R}^{n\times n}$ with general entries $C_{ij}=(\vartheta_i-\sigma_j)^{-1}$ for $1\leq i,j\leq n$ and a vector $\bm{f}(t):=[f_t(\vartheta_1),\; \cdots\;, f_t(\vartheta_n)]^T \in \mathbb{R}^n$.


Since the degree~$n$ is usually small to moderate, the linear system~\eqref{linsys} can be solved to high accuracy using multiprecision (e.g., in MATLAB using the Advanpix Multiprecision Toolbox~\cite{MCT}). Alternatively, by exploiting the structure of the Cauchy matrix, the system~\eqref{linsys} can be solved more accurately via a rank-revealing decomposition~\cite{dopico2012accurate}. Our numerical experiments indicate that this approach attains accuracy nearly indistinguishable from the multiprecision method for small to medium values of $n$, but begins to exhibit numerical instability for larger values of~$n$. Therefore, we recommend employing multiprecision arithmetic with $128$-digits for solving~\eqref{linsys}. Again, this one-off optimization does not create noticeable overhead as $n$ is small. 


To solve the optimization problem~\eqref{minimize} in MATLAB,  we use the \texttt{fminsearch} function. This algorithm is based on the Nelder–Mead simplex method~\cite{nelder1965simplex}, which determines the search directions by comparing the function values at the vertices of simplexes. This is suitable for solving problem~\eqref{minimize} where explicit analytical derivatives are not easily available. The overall method is summarized in Algorithm~\ref{algo2}.

\begin{algorithm}[H]
\caption{Choosing Zolotarev poles in a refined interval}\label{algo2}
    \textbf{Input:}  Degree of approximation $n\in \mathbb{N}^+$, a vector of $m \gg n$ real positive sample points $Z:=[z_1,\; z_2,\; \cdots,\; z_m]^T$, a time interval $T = [t_{\min}, t_{\max}].$\\
    \textbf{Output:} An optimal pole-interval $[c^*, d^*],$ Zolotarev poles $\{\sigma_i\}_{i=1}^n \subset [c^*, d^*],$ interpolation points $\{\vartheta_i\}_{i=1}^n \subset [0, +\infty)$, residue vector $\bm{\alpha}(t)\in\mathbb{R}^n$, the associated rational interpolant $r^{(c^*,d^*)}_{t,n}$ for each $t\in T.$
\begin{algorithmic}[1]
        \STATE Set the initial pole-interval as $[c_0, d_0] = [-n/(\sqrt{2}t_{\min}), -n/(\sqrt{2}t_{\max})]$.
        \STATE Find the minimizer $(c^*, d^*)$ of the problem~\eqref{minimize} using \texttt{fminsearch} with the initial guess $(c_0, d_0)$.
        \STATE Obtain the poles $\{\sigma_i\}_{i=1}^n$ and interpolation points $\{\vartheta_i\}_{i=1}^n$ from Algorithm~\ref{algo} with input $[c^*, d^*]$ and $n$.\\
        \STATE Form the matrix $\widehat{C} \in \mathbb{R}^{m\times n}$ with entries $\widehat{C}_{ij} = (z_i-\sigma_j)^{-1}$.\\
        \FOR{each $t \in T$,}
        \STATE Compute the residues $\bm{\alpha}(t)$ of $r^{(c^*,d^*)}_{t,n}$ via the linear system~\eqref{linsys}.\\
        \STATE Evaluate the approximant as $r^{(c^*,d^*)}_{t,n}(Z)=\widehat{C} \bm{\alpha}(t)$.\\
        \ENDFOR
    \end{algorithmic}
\end{algorithm}

Minimizing the discrete time-uniform error in~\eqref{minimize} over $Z$ does not in general guarantee that the target continuous time-uniform error in~\eqref{time_uni_err} will be bounded from above by a given error tolerance, as the latter is defined over the whole nonnegative real axis. However, this can be ensured in our setting, as the approximant $r^{(c^*, d^*)}_{t,n}(z)$ is uniformly continuous on $z\geq 0$ for all $t\in T.$

\begin{theorem}\label{thm_discont_cont}
    Let $Z:=[z_1,z_2,\cdots , z_m]^T$ be a discretization of $[0, +\infty)$ satisfying $0 \leq z_1<z_2<\cdots <z_m = L$.  For each $t \in T:=[t_{\min}, t_{\max}]$, let $r^*_{t,n}(z)=r^{(c^*,d^*)}_{t,n}(z)$ be the degree $n$ rational interpolant of $f_t(z) = \exp(-tz)$ with poles and interpolation points obtained from Algorithm~\ref{algo2}. Then we have 
    \begin{equation}\label{disc_errbnd}
    \max_{t\in T}\max_{z \geq 0}\vert r^*_{t,n}(z)-f_t(z)\vert \leq \max_{t\in T}\max_{z\in Z}\vert r^*_{t,n}(z)-f_t(z) \vert+\epsilon(\delta, L),
    \end{equation}
    where $\delta $ is the \enquote*{density} of $Z$ in the interval $[0, L]$ defined as
    \begin{equation}\label{density}
        \delta = \max_{x\in [0, L]}
        \min_{y\in Z} \vert x-y \vert,
    \end{equation}
    and $\epsilon(\delta, L)$ in the last term of~\eqref{disc_errbnd} satisfies
    \begin{equation}
        \lim_{\delta\rightarrow0}\lim_{L\rightarrow+\infty}\epsilon(\delta, L) = 0.
    \end{equation}
\begin{proof}
        For each $t \in T,$ we can use \cite[p.~86, Lem.~2]{Cheney} to show that 
        \begin{equation}\label{firstpart}
            \max_{z\in[0,L]}\left| r^*_{t,n}(z)-f_t(z) \right| \leq \max_{z\in Z}\left| r^*_{t,n}(z)-f_t(z) \right| + w_{\left(r^*_{t,n}-f_t\right)}(\delta),
        \end{equation}
        where 
        \begin{equation}
            w_{\left(r^*_{t,n}-f_t\right)}(\delta) = \max_{\substack{0\leq x,y \leq L\\ \left| x-y \right| \leq \delta}}\left| \left(r^*_{t,n}-f_t\right)(x)-\left(r^*_{t,n}-f_t\right)(y)\right|
        \end{equation}
        is the modulus of continuity of the function $r^*_{t,n}-f_t$. Denote the residues of $r^*_{t,n}$ as $\{\alpha^*_j(t)\}_{j=0}^n$ and the poles as $\{\sigma^*_j\}_{j=1}^n\subset (-\infty, 0)$ in the partial fraction form of~\eqref{partial_fraction_form}, then for any $s \geq L,$ we have 
        \begin{align*}
            \left|r^*_{t,n}(L)-r^*_{t,n}(s) \right| &\leq \sum_{j=1}^n \left| \alpha^*_j(t)\right| \left| \frac{1}{L-\sigma^*_j}-\frac{1}{s-\sigma^*_j}\right|\\
            &\leq \sum_{j=1}^n\left| \alpha^*_j(t)\right|\left( \frac{1}{L-\sigma^*_j}+\frac{1}{s-\sigma^*_j}\right) \\
            &\leq 2\sum_{j=1}^n\frac{\left| \alpha^*_j(t)\right| }{L-\sigma^*_j} \\
            &\leq \frac{2}{L}\sum_{j=1}^n\left| \alpha^*_j(t)\right|,
        \end{align*}
        and hence,
        \begin{align*}
        \bigl|| r^*_{t,n}(s)-f_t(s)| - | r^*_{t,n}(L)-f_t(L)| \bigr| &\leq \left| f_t(s)-f_t(L)+r^*_{t,n}(L)-r^*_{t,n}(s) \right| \\
        & \leq \left| f_t(s)-f_t(L)\right|+\left|r^*_{t,n}(L)-r^*_{t,n}(s) \right| \\
        & \leq f_t(L)+\left|r^*_{t,n}(L)-r^*_{t,n}(s) \right| \\
        &\leq f_t(L)+\frac{2}{L}\sum_{j=1}^n\left| \alpha^*_j(t)\right|.
        \end{align*}
    Therefore, 
    \begin{align}\label{secondpart}
            \max_{z\in [L, +\infty)}\left| r^*_{t,n}(z)-f_t(z) \right| &\leq \left| r^*_{t,n}(L)-f_t(L) \right| +f_t(L)+\frac{2}{L}\sum_{j=1}^n\left| \alpha^*_j(t)\right| \notag \\
            &\leq \max_{z\in Z}\left| r^*_{t,n}(z)-f_t(z) \right| +f_t(L)+\frac{2}{L}\sum_{j=1}^n\left| \alpha^*_j(t)\right|.
    \end{align}
    Combining~\eqref{secondpart} with~\eqref{firstpart}, we conclude
    \begin{align*}
         \max_{z\geq 0}\left| r^*_{t,n}(z)-f_t(z) \right| &=  \max\left\{\max_{z\in [0, L]}\left| r^*_{t,n}(z)-f_t(z) \right|,  \max_{z\in [L, +\infty)}\left| r^*_{t,n}(z)-f_t(z) \right|\right\} \\
         &\leq \max_{z\in Z}\left| r^*_{t,n}(z)-f_t(z) \right| + \widehat{\epsilon}(\delta, L),
    \end{align*}
    where 
    \begin{equation}
    \widehat{\epsilon}(\delta, L) = w_{\left(r^*_{t,n}-f_t\right)}(\delta)+f_t(L)+\frac{2}{L}\sum_{j=1}^n\left| \alpha^*_j(t)\right|. 
    \end{equation}
    Finally, by taking the maximum error over the time domain $T$, we have
    \begin{align}
    \max_{t\in T}\max_{z \geq 0}\left| r^*_{t,n}(z)-f_t(z)\right| \leq \max_{t\in T}\max_{z\in Z}\left| r^*_{t,n}(z)-f_t(z) \right|+\epsilon(\delta, L),
    \end{align}
    where 
    \begin{equation}\label{remainder}
    \epsilon(\delta, L) = \max_{t\in T}\left(w_{\left(r^*_{t,n}-f_t\right)}(\delta)\right)+f_{t_{min}}(L)+\frac{2}{L}\max_{t\in T}\sum_{j=1}^n\left| \alpha^*_j(t)\right|.
     \end{equation}    
    Since both $f_t(z)$ and $r^*_{t,n}(z)$ are uniformly continuous on $z\in[0,+\infty)$ for all $t\in T$, the first term on the right-hand side of~\eqref{remainder} vanishes as $\delta\to0$. The last two terms vanish as $L\to +\infty$. 
\end{proof}

   
\end{theorem}

\section{Floating-point arithmetic  analysis}\label{error_anal}
So far, we have considered only the scalar time-uniform error, the upper bound in~\eqref{time_uni_err}. In this section, let us discuss several sources of numerical errors when evaluating $\{r_{t,n}(A)\bm{b}\}_{t\in T}$ in floating-point arithmetic. 
For this analysis, we will consider the more general  case, often arising with finite element discretizations, where $A = M^{-1}K$ and $\bm{b} = M^{-1}\bm{q}$ for a symmetric positive definite matrix $M$ (mass matrix), a symmetric positive semidefinite matrix $K$ (stiffness matrix), and a vector $\bm{q}$. Now each shifted linear system 
$
(A- \sigma_iI)\bm{x}_i = \bm{b}
$
is equivalent to 
\begin{equation}
(K-\sigma_i M)\bm{x}_i = \bm{q}, \label{CMequ}
\end{equation}
which is still symmetric positive definite for all $\sigma_i < 0$. 
This reduces to the standard case when $M = I$, the identity matrix. Note that $A=M^{-1} K$ is similar to $K = M^{1/2} A M^{-1/2}$ and an equivalent upper bound to \eqref{time_uni_err} on the rational approximation error is available when the $2$-norm is replaced by the $M$-norm, $\|\bm x\|_M = \sqrt{{\bm x}^T M \bm x}$.

Throughout this section, we use  the standard model and notation of floating-point arithmetic; see \cite{higham2002accuracy}. We use  $fl(\cdot)$ to indicate that an expression is evaluated in floating-point arithmetic, and $u$ denotes the unit round-off.   In particular, $\widetilde x = fl(x) = x (1+\delta),$ where $|\delta|\leq u.$ Computed quantities are  labeled with a hat or tilde.

Given a rational approximant in partial fraction form, 
\begin{equation}\label{exact_appr}
    \bm{y}(t):=r_{t,n}(A)\bm{b} = \sum_{i=1}^n\alpha_i(t)\bm{x}_i,
\end{equation}
with each $\bm{x}_i$ being the exact solution to \eqref{CMequ}, we wish to study the numerically evaluated counterpart $fl({\bm{y}}(t))$ and bound the  distance $\left\|fl({\bm{y}}(t))-\bm{y}(t)\right\|_M$. We will study three sources of rounding errors, each in a separate subsection below. Each subsection bounds one error source, $E_1,E_2,E_3$, and by the triangle inequality,
\[
        \left\|fl({\bm{y}}(t))-\bm{y}(t)\right\|_M \leq E_1 + E_2 + E_3.
\]

\subsection{Error source 1: inaccurate linear solves}\label{error1}

The first source of error arises from the inaccuracy of solving the shifted linear systems~\eqref{CMequ}. 
Let $\widehat{\bm{x}}_i$ be the inexact solutions, and denote the associated residual vectors by 
\begin{equation}
\bm{e}_i:= (K-\sigma_iM)\widehat{\bm{x}}_i -\bm{q}.
\label{eq:resvecs}
\end{equation}
Consider the computed $\widehat{\bm{y}}(t)$ given by
\begin{equation}\label{lin_comb}
\widehat{\bm{y}}(t):=\sum_{i=1}^n\alpha_i(t)\widehat{\bm{x}}_i.
\end{equation}
Theorem~\ref{err1-bnd} below provides an upper bound for the resulting error. Importantly, this bound does not depend on the exact solutions $\bm{x}_i$, which are not available in practice.

\begin{theorem}\label{err1-bnd}
    The $M$-norm error of $\widehat{\bm{y}}$ defined in \eqref{lin_comb} satisfies 
    \[
    E_1 := \left\| \widehat{\bm{y}}(t)-\bm{y}(t)\right\|_M \leq \displaystyle\sum_{i=1}^n \frac{\left|\alpha_i(t)\right|}{\left|\sigma_i \right|} \left\| M^{-1}\bm{e}_i \right\|_M,
    \]
    where each $\bm{e}_i$ defined in \eqref{eq:resvecs}  is the residual associated with the $i$-th shifted linear system~\eqref{CMequ}.
\end{theorem}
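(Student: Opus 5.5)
The plan is to reduce everything to the error in each individual shifted solve and then use the triangle inequality. Since $\bm{y}(t)=\sum_i \alpha_i(t)\bm{x}_i$ and $\widehat{\bm{y}}(t)=\sum_i\alpha_i(t)\widehat{\bm{x}}_i$, we have
\[
\left\|\widehat{\bm{y}}(t)-\bm{y}(t)\right\|_M \leq \sum_{i=1}^n |\alpha_i(t)|\,\left\|\widehat{\bm{x}}_i-\bm{x}_i\right\|_M .
\]
It therefore suffices to prove, for each $i$, that
\[
\left\|\widehat{\bm{x}}_i-\bm{x}_i\right\|_M\leq \left\|M^{-1}\bm{e}_i\right\|_M/|\sigma_i| .
\]

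First I express the error through the residual. Subtracting $(K-\sigma_iM)\bm{x}_i=\bm{q}$ from \eqref{eq:resvecs} gives $(K-\sigma_i M)(\widehat{\bm{x}}_i-\bm{x}_i)=\bm{e}_i$. Factoring out $M$, this becomes $\widehat{\bm{x}}_i-\bm{x}_i=(A-\sigma_i I)^{-1}M^{-1}\bm{e}_i$, where $A=M^{-1}K$.

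Next I bound the $M$-norm of $(A-\sigma_i I)^{-1}$. Using $\|\bm{v}\|_M=\|M^{1/2}\bm{v}\|_2$, the $M$-norm of this operator equals the $2$-norm of $M^{1/2}(A-\sigma_iI)^{-1}M^{-1/2}$. That matrix is $(M^{-1/2}KM^{-1/2}-\sigma_i I)^{-1}$, which is symmetric because $M^{-1/2}KM^{-1/2}$ is symmetric positive semidefinite. Its eigenvalues are $\lambda\geq 0$, so the shifted matrix has eigenvalues $\lambda-\sigma_i\geq|\sigma_i|>0$ (recall $\sigma_i<0$). Hence its $2$-norm is at most $1/|\sigma_i|$. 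Combining this with the previous step gives the per-solve bound, and summing over $i$ gives the theorem.

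The only point needing care is the similarity transformation between the $M$-norm and the $2$-norm. In particular, I must confirm that the relevant matrix is symmetric, so that its norm equals its spectral radius. With that check in place, the proof is short.
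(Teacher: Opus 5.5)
Your proposal is correct and follows essentially the same route as the paper: the triangle inequality over the partial-fraction terms, the identity $(M^{-1}K-\sigma_i I)(\widehat{\bm{x}}_i-\bm{x}_i)=M^{-1}\bm{e}_i$, and the bound $\|(M^{-1/2}KM^{-1/2}-\sigma_i I)^{-1}\|_2\leq 1/|\sigma_i|$ for $\sigma_i<0$. You spell out the symmetry and eigenvalue check behind the last step, which the paper leaves implicit.
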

\begin{proof} Applying the triangle inequality and using the symmetric positive definiteness of $M$, we have
    \begin{align*}
        \left\| \widehat{\bm{y}}(t)-\bm{y}(t)\right\|_M &= \left\| \sum_{i=1}^n \alpha_i(t)(\widehat{\bm{x}}_i-\bm{x}_i) \right\|_M \\
        &\leq \sum_{i=1}^n \left|\alpha_i(t)\right| \left\| \widehat{\bm{x}}_i-\bm{x}_i \right\|_M \\
        &\leq \sum_{i=1}^n \left|\alpha_i(t)\right| \left\|(M^{-1}K-\sigma_iI)^{-1} \right\|_M \left\|(M^{-1}K-\sigma_iI)(\widehat{\bm{x}}_i-\bm{x}_i) \right\|_M \\
        & =  \sum_{i=1}^n \left|\alpha_i(t)\right| \left\| (M^{-1/2}KM^{-1/2}  - \sigma_iI)^{-1} \right\|_2 \left\|M^{-1}\bm{e}_i \right\|_M\\
        &\leq \sum_{i=1}^n \frac{\left|\alpha_i(t)\right|}{\left| \sigma_i \right|} \left\|M^{-1}\bm{e}_i \right\|_M,
    \end{align*}
as asserted. We note that 
\[
\left\|M^{-1}\bm{e}_i \right\|_M \leq 
\frac{1}{\sqrt{\lambda_{\min}(M)}} \left\|\bm{e}_i \right\|_2 \leq 
\frac{1}{{\lambda_{\min}(M)}} \left\|\bm{e}_i \right\|_M,
\]
which could be used to remove the dependence on $M^{-1}$ if (an estimate for) $\lambda_{\min}(M)$ is available.
\end{proof}

Assuming that each shifted linear system is solved using a direct method such as sparse Cholesky factorization~\cite[Chapter~11]{golub2013matrix}, the norm of each residual $\bm{e}_i$ is expected to be on the order of machine precision, and therefore the error is expected to be small provided that the residues $\alpha_i(t)$ are of small to moderate magnitude. However, $\alpha_i(t)$ may become large if the Cauchy matrix involved in the linear system~\eqref{linsys} has small singular values. This happens in particular when poles~$\sigma_i$ are close together.

\subsection{Error source 2: inaccurate residues}\label{error2}
Recall that solving~\eqref{linsys} using  standard double precision with $16$ digits of accuracy suffers from the increase of the condition number of the Cauchy matrix for large degrees of approximation. To avoid this, we recommend using multiprecision (e.g., with $128$ digits of accuracy). This is computationally tractable due to the moderate size of the Cauchy matrix. 
On the other hand, we do not wish to form the linear combination~\eqref{lin_comb} with residues $\alpha_i(t)$ in  multiprecision, since the $\widehat{\bm{x}}_i$ may be very high-dimensional vectors. We therefore round the residues to double precision, introducing an additional error in the computation of~\eqref{lin_comb}. Theorem~\ref{round_err_resid} below gives an upper bound for this error.

\begin{theorem}\label{round_err_resid}
    Let $\widehat{\bm{y}}(t)$ be the linear combination~\eqref{lin_comb} formed with the exact residues~$\alpha_i(t)$. Let $\widetilde{\alpha}_i(t)$ be the residues rounded to double precision, and define 
    \begin{align}\label{numer_lincomb}
    \widetilde{\bm{y}}(t):=\sum_{i=1}^n\widetilde{\alpha}_i(t)\widehat{\bm{x}}_i. 
    \end{align}
    Then we have 
    \begin{align}
    E_2 := \left\| \widetilde{\bm{y}}(t) - \widehat{\bm{y}}(t) \right\|_M \leq u \sum_{i=1}^n \left|\alpha_i(t)\right|\left\| \widehat{\bm{x}}_i\right\|_M. \label{round_errbnd}
    \end{align} 
    \begin{proof}
    Applying the triangular inequality, we have
        \[
        \left\| \widetilde{\bm{y}}(t) - \widehat{\bm{y}}(t) \right\|_M \leq \sum_{i=1}^n \left|\widetilde{\alpha}_i(t)-\alpha_i(t)\right| \left\|\widehat{\bm{x}}_i\right\|_M \leq u \sum_{i=1}^n \left|\alpha_i(t)\right| \left\|\widehat{\bm{x}}_i\right\|_M .
        \]
    \end{proof}
\end{theorem}


Similarly to the numerical error of the computed shifted linear systems, the bound in Theorem~\ref{round_err_resid} suggests that the error arising from rounding the residues may be amplified when the residues are of very large magnitude.

\subsection{Error source 3: inaccurate linear combination}\label{error3}
The third source of error is the inaccuracy from forming the linear combination~\eqref{numer_lincomb} given the computed residues $\widetilde{\alpha}_i(t)$ and solutions $\widehat{\bm{x}}_i$. Theorem~\ref{err2_bnd} below provides an upper bound for this error, which also depends on the magnitude of residues.
\begin{theorem}\label{err2_bnd}
    Let $fl\left(\widetilde{\bm{y}}(t)\right)$ be the computed result of $\widetilde{\bm{y}}(t)$ in~\eqref{lin_comb}, then we have
    \[
    E_3 := \left\| fl\left(\widetilde{\bm{y}}(t)\right)-\widetilde{\bm{y}}(t) \right\|_M 
    \leq 
    \gamma_n \sqrt{\kappa_2(M)}\,\Big\|  \big|\widehat{X}\big|\cdot |\widetilde{\bm{\alpha}}(t)|\Big\|_M
    \]
    where $\widehat{X}:=[\widehat{x}_1, \widehat{x}_2,\cdots, \widehat{x}_n]$, $\widetilde{\bm{\alpha}}(t): = [\widetilde{\alpha}_1(t), \widetilde{\alpha}_2(t), \cdots, \widetilde{\alpha}_n(t)]^T$, $\kappa_2(M)$ is the $2$-norm condition number of $M$, and $\gamma_n := \frac{nu}{1-nu}$ ($u$ is the unit roundoff). Here, $|\cdot|$ denotes the entrywise absolute value of a vector or matrix.
\end{theorem}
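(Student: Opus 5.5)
We would use the standard componentwise rounding-error analysis of an inner product, as in Higham's monograph, applied row by row to the matrix--vector product $\widehat{X}\widetilde{\bm{\alpha}}(t)$. For each component $k$, the computed value of the $k$-th entry of $\widetilde{\bm{y}}(t)=\sum_{i=1}^n \widetilde{\alpha}_i(t)\,(\widehat{\bm{x}}_i)_k$ is obtained by recursive summation of $n$ products. The standard model then gives
\[
fl\bigl(\widetilde{\bm{y}}(t)\bigr)_k=\sum_{i=1}^n \widetilde{\alpha}_i(t)(\widehat{\bm{x}}_i)_k(1+\theta^{(k)}_i),\qquad |\theta^{(k)}_i|\le \gamma_n .
\]
Each term involves at most $n$ rounding factors: one multiplication and at most $n-1$ additions. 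The lemma $\prod(1+\delta_j)^{\pm1}=1+\theta_n$ with $|\theta_n|\le\gamma_n$ then applies. Consequently we obtain the componentwise forward bound
\[
\bigl|fl(\widetilde{\bm{y}}(t))-\widetilde{\bm{y}}(t)\bigr|\le \gamma_n\,|\widehat{X}|\,|\widetilde{\bm{\alpha}}(t)|
\]
entrywise. This requires no assumption on $M$ and is exactly the classical matrix--vector product bound.

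The remaining step converts this entrywise inequality into an $M$-norm inequality, and this is where $\sqrt{\kappa_2(M)}$ enters. We use that the $2$-norm is monotone: if $|\bm{v}|\le \bm{w}$ entrywise, then $\|\bm{v}\|_2\le\|\bm{w}\|_2$. The $M$-norm is not monotone in general, so we pass through the $2$-norm. Set $\bm{v}=fl(\widetilde{\bm{y}}(t))-\widetilde{\bm{y}}(t)$ and $\bm{w}=\gamma_n|\widehat{X}||\widetilde{\bm{\alpha}}(t)|$. Then
\[
\|\bm{v}\|_M\le\sqrt{\lambda_{\max}(M)}\,\|\bm{v}\|_2\le\sqrt{\lambda_{\max}(M)}\,\|\bm{w}\|_2\le\sqrt{\frac{\lambda_{\max}(M)}{\lambda_{\min}(M)}}\,\|\bm{w}\|_M .
\]
The first and last inequalities follow from the Rayleigh quotient bounds $\lambda_{\min}(M)\|\bm{x}\|_2^2\le \bm{x}^TM\bm{x}\le\lambda_{\max}(M)\|\bm{x}\|_2^2$. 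Pulling the scalar $\gamma_n$ out of the norm gives the claimed bound with $\sqrt{\kappa_2(M)}$.

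The main point needing care is the first step: the $\gamma_n$ count must be justified for whichever summation order is used. Recursive summation gives at most $n$ factors per term, and pairwise summation gives fewer, so $\gamma_n$ is a valid upper bound in either case. We also need the standing assumption $nu<1$. The norm-conversion step is routine, but it is the only place where the non-monotonicity of $\|\cdot\|_M$ must be handled. This is why the bound carries the factor $\sqrt{\kappa_2(M)}$, which reduces to $1$ when $M=I$.
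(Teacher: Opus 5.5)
Your proposal is correct and follows the same route as the paper's proof: the entrywise matrix--vector product bound $\bigl|fl(\widetilde{\bm{y}}(t))-\widetilde{\bm{y}}(t)\bigr|\le\gamma_n|\widehat{X}|\,|\widetilde{\bm{\alpha}}(t)|$ from Section~3.5 of Higham, then monotonicity of the $2$-norm, then the Rayleigh-quotient bounds $\sqrt{\lambda_{\max}(M)}$ and $1/\sqrt{\lambda_{\min}(M)}$, which produce the factor $\sqrt{\kappa_2(M)}$. Your justification of the $\gamma_n$ count and your remark that $\|\cdot\|_M$ is not monotone spell out steps the paper leaves implicit.
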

\begin{proof}
    Applying the results in~\cite[Section~3.5]{higham2002accuracy}, we have
     \[
   \left| fl\left(\widetilde{\bm{y}}(t)\right)-\widetilde{\bm{y}}(t) \right| \leq \gamma_n \left|\widehat{X}\right|\cdot \left|\widetilde{\bm{\alpha}}(t)\right|
    \]
    where the inequality holds entrywise. Therefore, it follows that  
     \[
   \left\| fl\left(\widetilde{\bm{y}}(t)\right)-\widetilde{\bm{y}}(t) \right\|_2 \leq \left\|\gamma_n \left|\widehat{X}\right|\cdot |\widetilde{\bm{\alpha}}(t)|\right\|_2, 
    \]
    and hence 
    \begin{align*}
    \left\| fl\left(\widetilde{\bm{y}}(t)\right)-\widetilde{\bm{y}}(t) \right\|_M &\leq \sqrt{\lambda_{\max}(M)}  \left\| fl\left(\widetilde{\bm{y}}(t)\right)-\widetilde{\bm{y}}(t) \right\|_2 \\ &\leq \sqrt{\lambda_{\max}(M)}\left\|\gamma_n \left|\widehat{X}\right|\cdot \left|\widetilde{\bm{\alpha}}(t)\right|\right\|_2 \\ &\leq \sqrt{\frac{\lambda_{\max}(M)}{\lambda_{\min}(M)}}\left\|\gamma_n \left|\widehat{X}\right|\cdot \left|\widetilde{\bm{\alpha}}(t)\right|\right\|_M \\ & = \sqrt{\kappa_2(M)}\left\|\gamma_n \left|\widehat{X}\right|\cdot \left|\widetilde{\bm{\alpha}}(t)\right|\right\|_M.
    \end{align*}
\end{proof}

Finally, note that if no other floating-point errors occur when evaluating the rational approximant~\eqref{exact_appr}, $fl\left(\widetilde{\bm{y}}(t)\right)$ is indeed the final numerical result of the evaluation and we may write $fl\left(\widetilde{\bm{y}}(t)\right) = fl(\bm{y}(t))$.


\medskip

\subsection{Growth of the residues}

Let us briefly comment on the growth of the residues $\bm{\alpha}(t)$ computed via solving~\eqref{linsys} involving the $n\times n$ Cauchy matrix~$C$.
We have 
$$
    \| \bm{\alpha}(t)\|_\infty \leq \| \bm{\alpha}(t)\|_2 =\| C^{-1} \bm{f}(t) \|_2 
    \leq \|\bm{f}(t)\|_2/\sigma_{n}(C).
$$
Clearly, $\|\bm{f}(t)\|_2\leq \sqrt{n}$ with $n$ interpolation points for the exponential function. 

The decay properties of the singular values of Cauchy (and other matrices with displacement structure) have been well studied~\cite{beckermann2017singular}. In particular, it is known that 
\[
\sigma_n(C) \leq \sigma_1(C) Z_{n-1}([c,d],[0,+\infty]),
\]
where $Z_{n-1}$ is the Zolotarev number of order $n-1$ of the condenser $([c,d],[0,+\infty])$. Moreover, it is known that these numbers satisfy
\[
\lim_{n\to +\infty}  \left(Z_{n}([c,d],[0,+\infty])\right)^{1/n}=\exp\left(-\frac{1}{\operatorname{cap}([c,d],[0,+\infty])}\right),
\]
where $\operatorname{cap}$ denotes the logarithmic capacity of a condenser.

In our setting, it is easy to get an upper bound on $\sigma_1(C)$ since all entries of $C$  are bounded by $1/|d|$. By H{\"o}lder's matrix inequality $\|C\|_2 \leq \sqrt{\|C\|_1 \|C\|_\infty}$, and the fact that
\[
\|C\|_1
=
\max_{1\le j\le n}
\sum_{i=1}^n |C_{ij}|
\le
\frac{n}{|d|}, \quad 
\|C\|_\infty
=
\max_{1\le i\le n}
\sum_{j=1}^n |C_{ij}|
\le
\frac{n}{|d|},
\]
we have
\[
\sigma_1(C) = \|C\|_2 \leq \frac{n}{|d|}.
\]
Unfortunately, the above results do only provide us with an exponentially decaying  \emph{upper} bound  on the smallest singular value $\sigma_n(C)$. In order to obtain an upper bound the growth of $\bm{\alpha}(t)$, we would need a \emph{lower} bound on $\sigma_n(C)$. This bound would necessarily need to take into account that 
$C$ is generated from optimal Zolotarev interpolation nodes and poles on the condenser $([c^*,d^*],[0,+\infty))$. We are currently not aware of such a lower bound on the singular values of Cauchy matrices with Zolotarev nodes and poles. But we observe numerically that $\sigma_n(C)$ indeed decays exponentially at the expected rate and \emph{not faster.} 

In any case, our situation is further complicated by the fact that the plate $[c^*,d^*]$ of the condenser is not fixed but varies with $n$. The dependence on $n$ is a complicated matter that is beyond the scope of this paper, and we leave this for future work.

\section{Numerical experiments}\label{exper}
This section is dedicated to numerical experiments. Section~\ref{conc_appr} illustrates the approximation using concentrated real poles proposed in section~\ref{sec:weight}. The remaining subsections concern the approximation using distinct real poles proposed in section~\ref{sec:zolo}. In particular, section~\ref{herm_persp} justifies the suitability of our proposed poles from the perspectives of Hermite integral formula. Section~\ref{err_anal_exper} presents an illustration of our error analysis from section~\ref{error_anal}. 
An application of our new poles to a transient electromagnetic (TEM) problem is provided in section~\ref{tem}, including a comparison to complex-valued RKFIT poles. Finally, section~\ref{teb} presents an example demonstrating that the proposed uniform rational approximation of the matrix exponential maintains its high accuracy over a wide range of time intervals. 

All algorithms are implemented in MATLAB R2024b, and all experiments are performed on a Windows laptop equipped with an Intel i5-13500H CPU running at 2.6~GHz and 32~GB of RAM. The code for reproducing the numerical experiments is available
at \texttt{\url{https://github.com/nla-group/uniform-in-time-exponential}}.

\subsection{Approximation with a weight function}\label{conc_appr}

We begin with an example of  concentrated real-pole approximation. 
Here we use the time interval $T = [10^{-3}, 1]$, and for ease of comparison the same $T$ will be used in most other experiments.

Figure~\ref{unweighted_weighted} shows the $L^\infty$-norm approximation error using the poles proposed in~\cite{guttel2025concentrated} which minimize the unweighted time-uniform error~\eqref{time_uni_err}, and the poles given by~\eqref{opt_poles} which minimize the weighted time-uniform error~\eqref{time_uni_err_weighted}. We clearly see that introducing the weight function sacrifices accuracy at early times but yields a much higher level of accuracy at later times. This choice of weighted poles will be particularly suited to applications where $\|\exp(-tA)\bm{b}\|_2$ decays as $t$ increases, and it is desirable to retain a high relative accuracy over the whole time interval~$T$.


\begin{figure}[H]
    \centering
    \begin{subfigure}[b]{0.48\textwidth}
        \hspace*{-3mm}\includegraphics[width=1.0\textwidth]{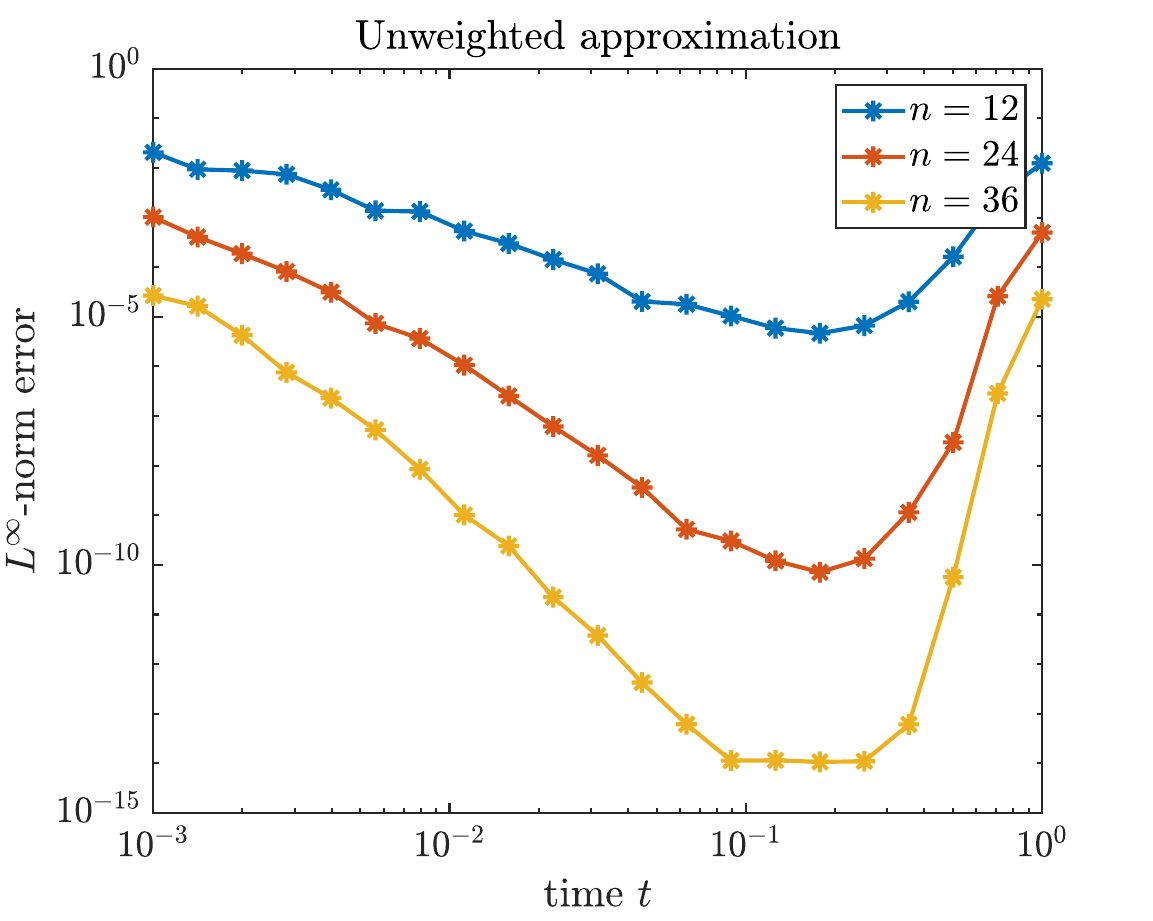}
    \end{subfigure}
    \begin{subfigure}[b]{0.48\textwidth}
        \includegraphics[width=1.0\textwidth]{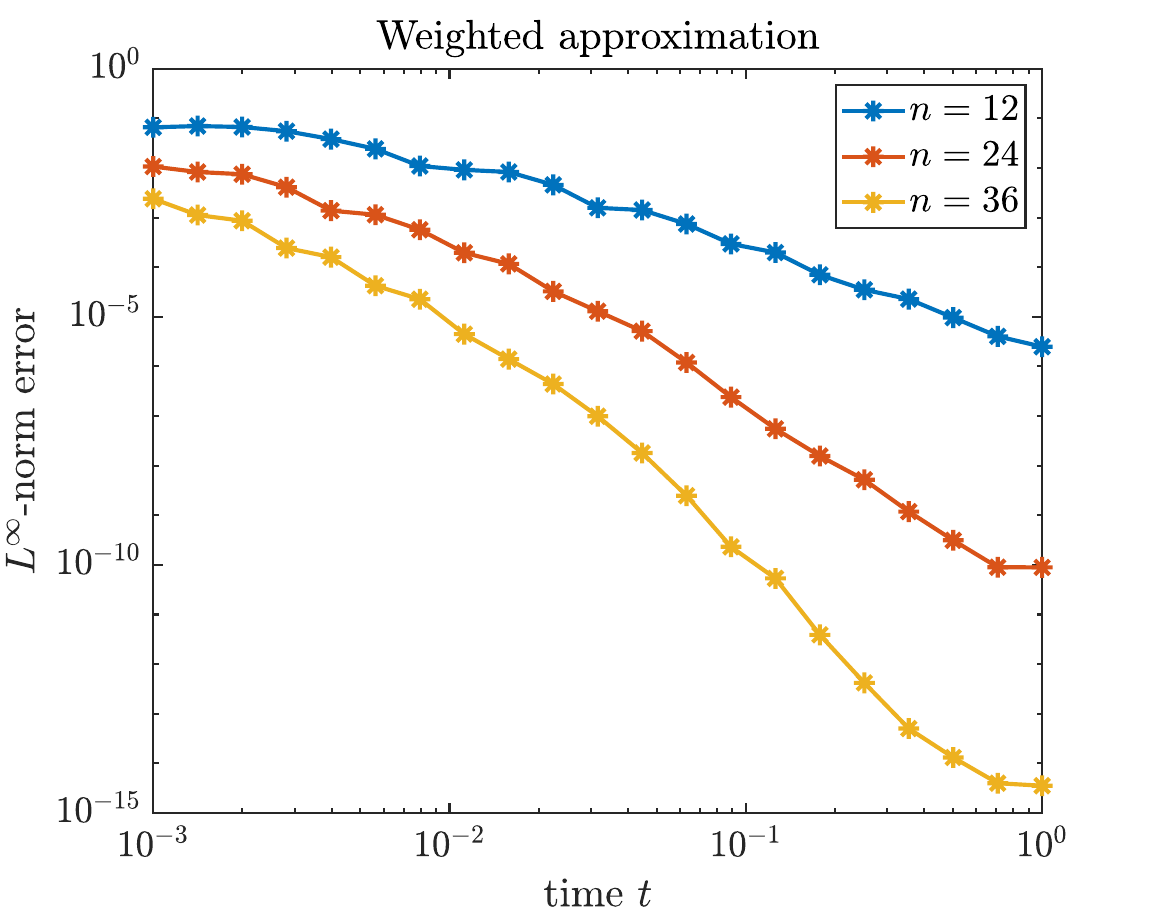}
    \end{subfigure}
    
    \caption{\textit{$L^{\infty}$-norm error using the unweighted optimal poles (left) and the weighted optimal poles (right).} }
    \label{unweighted_weighted}
\end{figure}

\subsection{Comparison with Zolotarev approximation}\label{herm_persp}
We now provide examples of approximations using distinct real poles computed by our Zolotarev  approach, illustrating the discussion at the beginning of section~\ref{sec:zolo}. 

Figure~\ref{contourplot} shows the level curves of $\log_{10} \vert \exp(-tz)/s_n(z)\vert$ with $t = t_{\min}=10^{-3}$ and $t = t_{\max}=1$ over the square domain 
$S = \{z \in \mathbb{C}:|\operatorname{Re}(z)| \le 10,\ |\operatorname{Im}(z)| \le 10\}$
for the degree $n = 25$ rational approximants computed using Algorithm~\ref{algo2}. In each plot, only five Zolotarev poles (black dots) and six interpolation points (red dots) are displayed, while the remaining poles and interpolation points are off-scale. For each choice of $t = t_{\min}$ and $t=t_{\max}$, there exists a contour $\Gamma$ where the order of $\vert \exp(-tz)/s_n(z)\vert$ is at least uniformly below $O(10^{-3}).$ In view of the Hermite integral error formula~\eqref{herm_err}, this illustrates the high accuracy of our rational approximants.

\begin{figure}[H]
    \centering
    \begin{subfigure}[b]{0.48\textwidth}
        \hspace*{-3mm}\includegraphics[width=1.0\textwidth]{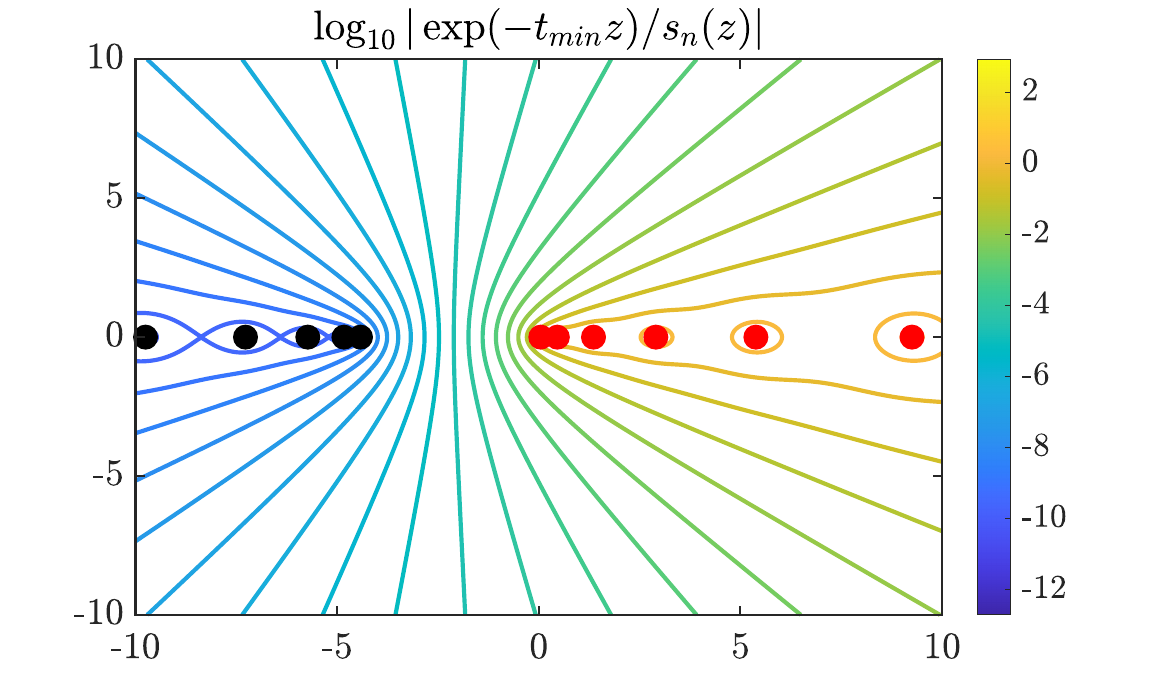}
    \end{subfigure}
    \begin{subfigure}[b]{0.48\textwidth}
        \includegraphics[width=1.0\textwidth]{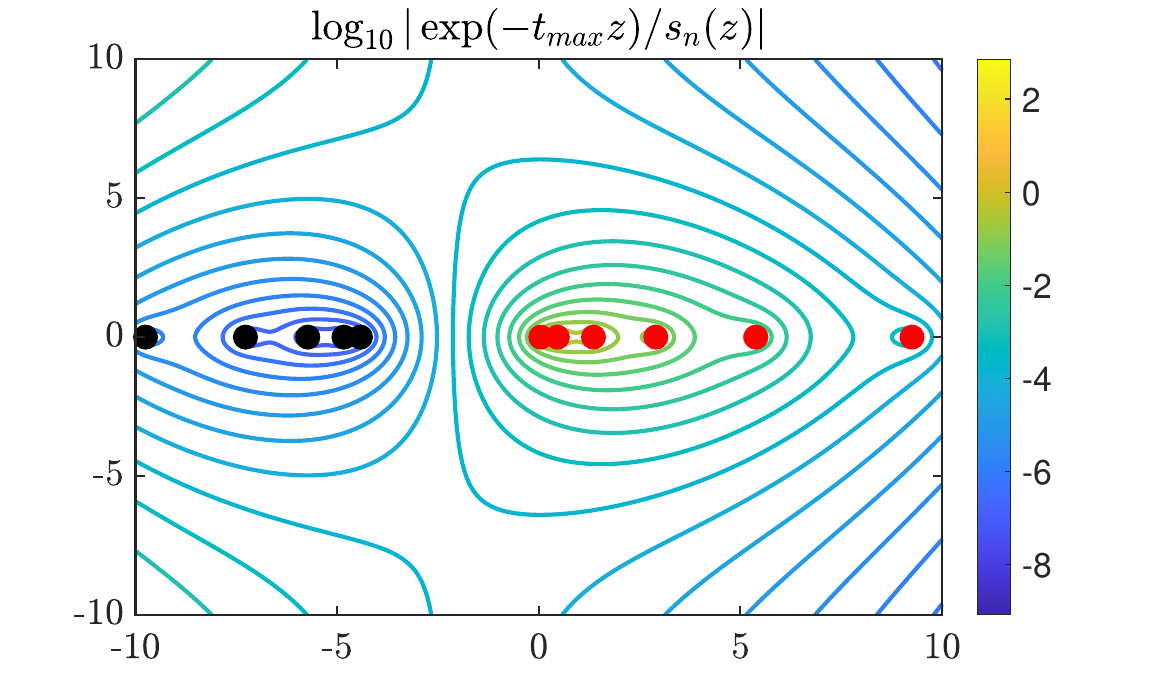}
    \end{subfigure}
    
    \caption{\textit{Level curves of the base-10 logarithm of $\vert \exp(-tz)/s_n(z)\vert$ for the degree $n = 25$ approximants computed using Algorithm~\ref{algo2}. Left: $t = t_{\min} = 10^{-3}$. Right: $t = t_{\max} = 1.$} }
    \label{contourplot}
\end{figure}

In Figure~\ref{three_types} we directly compare three different time-uniform approximants over $T=[10^{-3},1]$. We find that $33$ concentrated real poles from~\cite{guttel2025concentrated} are needed to ensure that the time-uniform error over $T$ is reduced to about $10^{-4}$, whereas the same can be achieved by using only $21$ Zolotarev real poles. This is due to the extra flexibility with which the Zolotarev poles are  placed on the negative real axis. A further advantage of Zolotarev poles is computational: distinct poles allow us to solve all the associated shifted linear systems in parallel. 

Figure~\ref{three_types} also includes a shared-pole family of RKFIT approximants with $18$ \emph{complex} poles~\cite{berljafa2017rkfit}. When evaluating these rational functions for matrix arguments, the number of required shifted linear systems is halved to $9$ since the poles appear in conjugate pairs. Nevertheless, the complex arithmetic involved can still lead to an overall longer time for solving these shifted linear systems, despite the smaller number of systems required. This will be demonstrated in section~\ref{tem}. 

\begin{figure}
    \centering
\includegraphics[width=0.7\linewidth]{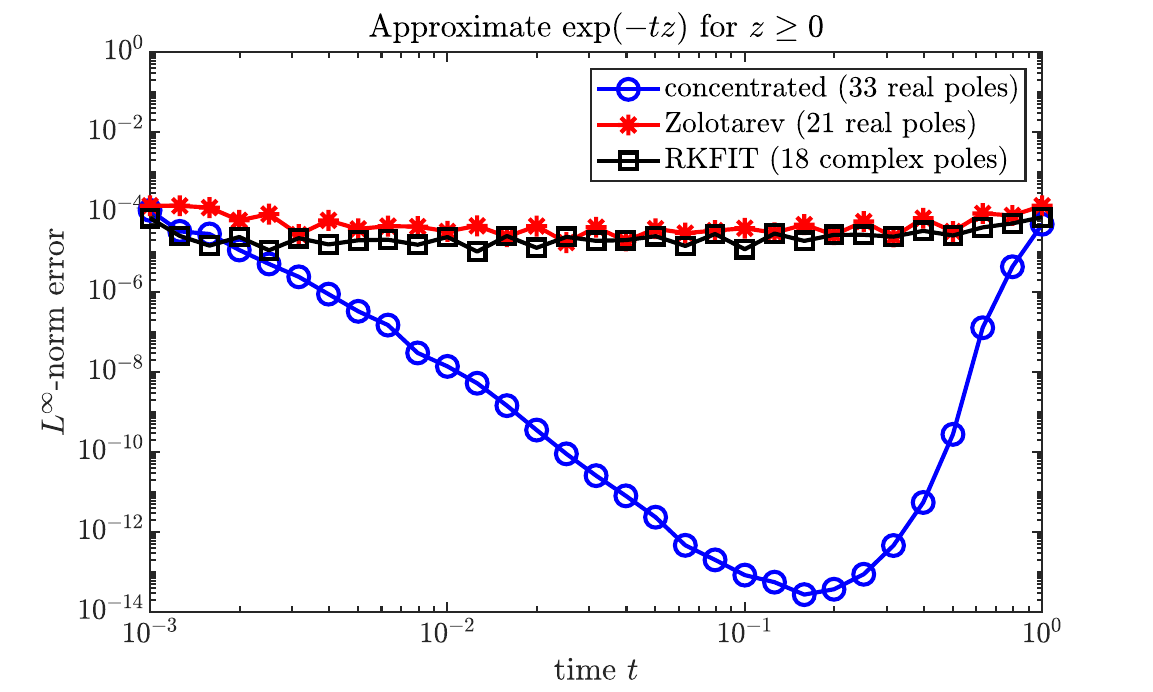}
    \caption{\textit{To achieve four-digit accuracy over the interval $T = [10^{-3}, 1]$, we need $33$ concentrated real poles, $21$ Zolotarev real poles, or $18$~complex RKFIT poles, respectively. }}
    \label{three_types}
\end{figure}

Finally, Figure~\ref{tue_all_T} shows the time-uniform error~\eqref{time_uni_err} obtained using the proposed Zolotarev rational interpolant, compared to those obtained using the optimal concentrated-pole approximants and the RKFIT approximants, for degrees $n$ from $1$ to $30$, and time intervals $T=[10^{-\beta},1]$ with $\beta = 1,\; 2,\; 3, \; 4$. For $\beta = 2,\; 3, \; 4$, the errors of the Zolotarev rational interpolants consistently lie between that of the optimal concentrated-pole approximants and that of the RKFIT approximants. An exception occurs for $\beta = 1$ where the Zolotarev interpolants and the optimal concentrated-pole approximant yield almost indistinguishable convergence. This is expected as for narrow time ranges (small values of $\beta$), we  approach the single-time approximation problem for which the poles of the best real-pole approximants coalesce \cite{borwein1983rational}. 

\begin{figure}
    \centering
    \begin{subfigure}[b]{0.48\textwidth}
        \hspace*{-3mm}\includegraphics[width=1.0\textwidth]{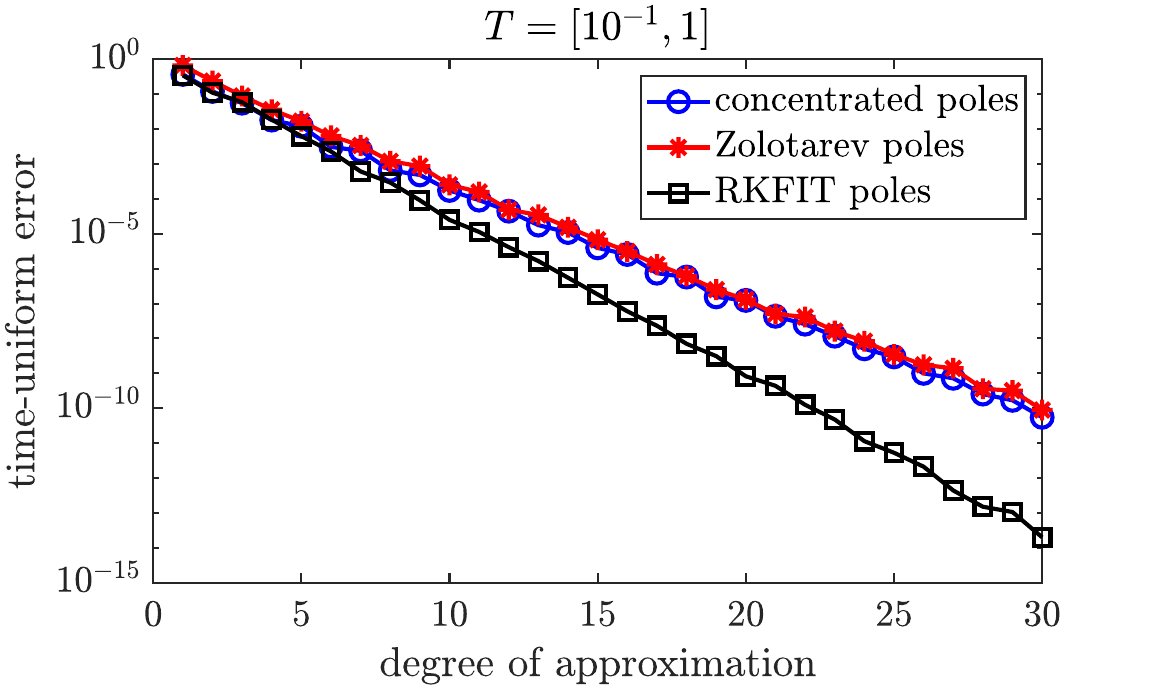}
    \end{subfigure}
    \begin{subfigure}[b]{0.48\textwidth}
    \includegraphics[width=1.0\textwidth]{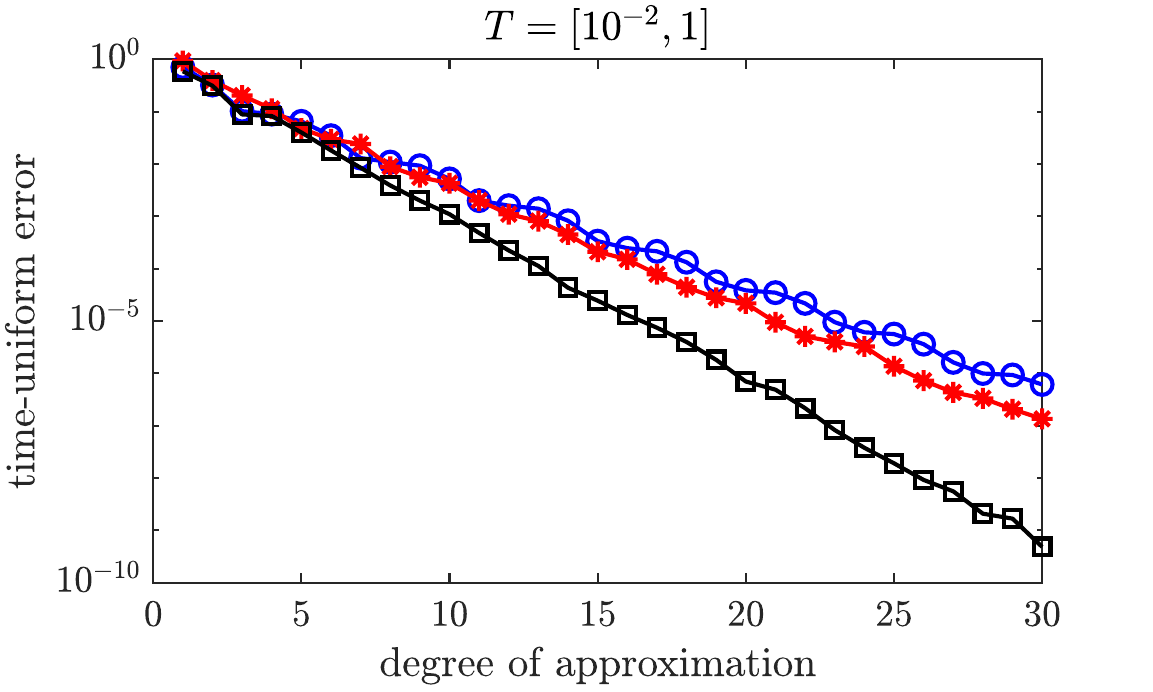}
    \end{subfigure}
    
    \vspace{0.5cm} 
    
    \begin{subfigure}[b]{0.48\textwidth}
         \hspace*{-3mm}\includegraphics[width=1.0\textwidth]{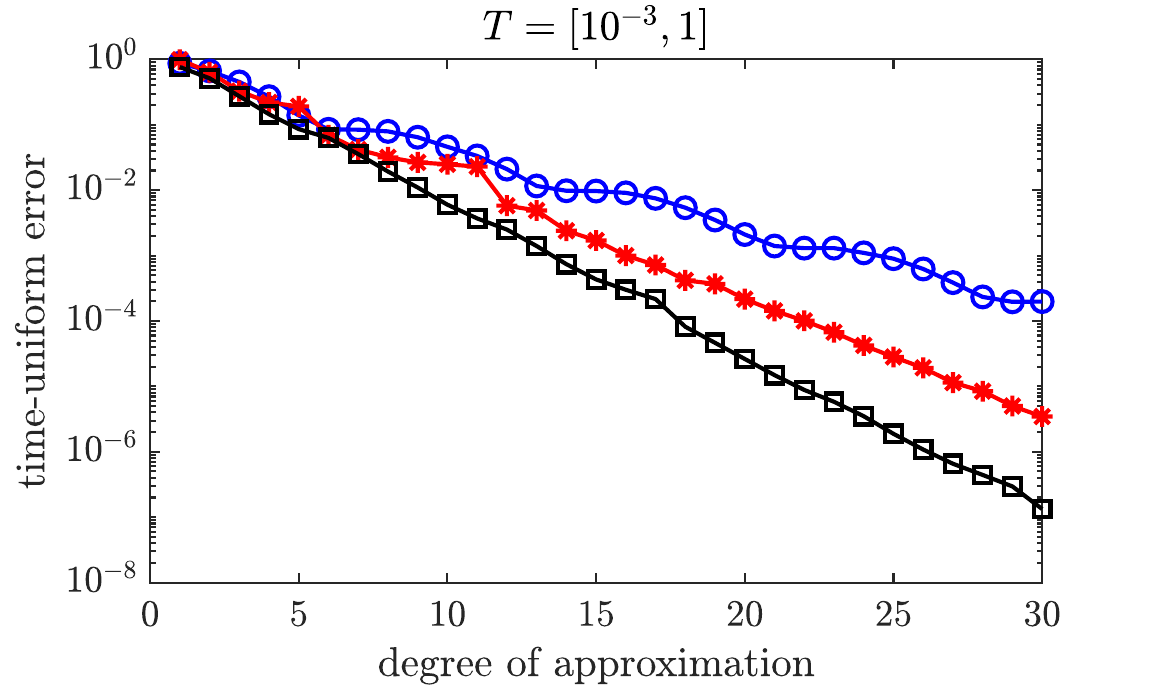}
    \end{subfigure}
    \begin{subfigure}[b]{0.48\textwidth}
        \includegraphics[width=1.0\textwidth]{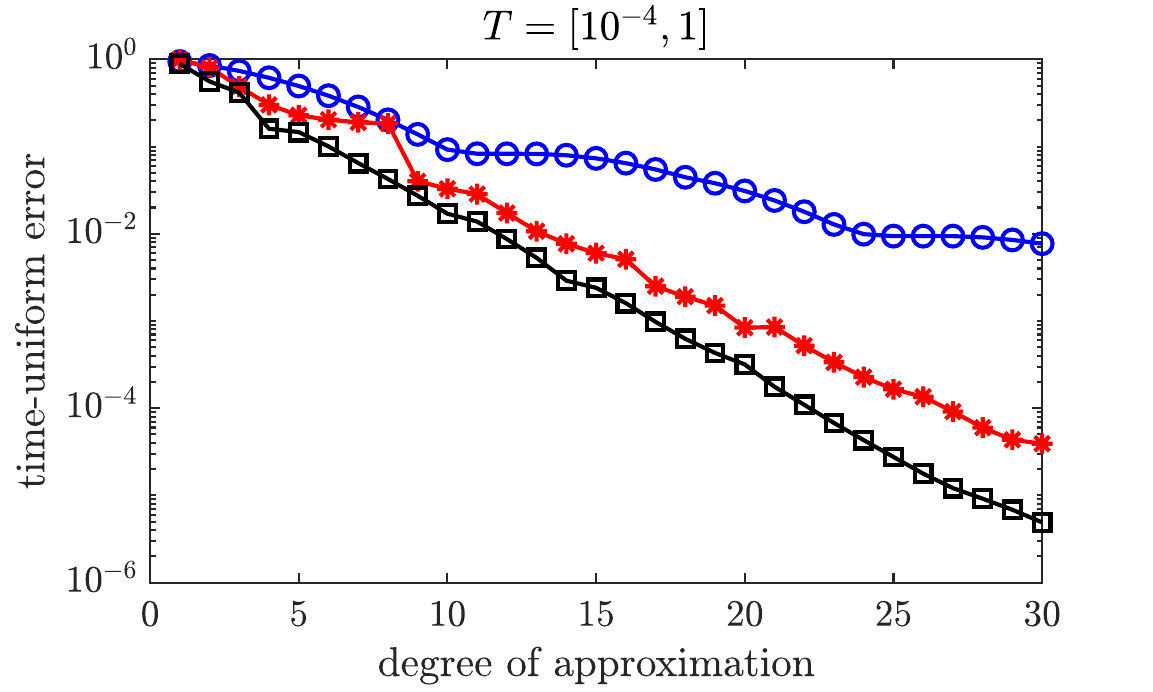}
    \end{subfigure}
    
    \caption{\textit{Time-uniform error obtained by the three types of approximants over different time intervals $T$, with the degrees $n=1,2,\ldots,30$.} }
    \label{tue_all_T}
\end{figure}

\subsection{Illustration of the error analysis}\label{err_anal_exper}
This section illustrates our floating-point error analysis in section~\ref{error_anal}. Figure~\ref{Stab_anal_plot} shows the numerically computed relative error (left-hand side of~\eqref{t_err_bnd}), its scalar error bound (right-hand side of~\eqref{t_err_bnd}),
\begin{eqnarray}
 \frac{\left\| r_{t,n}(A)\bm{b}-\exp(-tA)\bm{b}\right\|_2}{\left\|\bm{b}\right\|_2} 
   \leq \max_{z \geq 0}\left| r_{t,n}(z)-\exp(-tz)\right| \label{t_err_bnd}
\end{eqnarray}
along with the floating-point error bounds given by Theorem~\ref{err1-bnd},~\ref{round_err_resid},~\ref{err2_bnd} and their combination, for degree $n = 25,\, 35,\, 45$. In this example, we use a symmetric positive semidefinite tridiagonal matrix $A \in \mathbb{R}^{1,000 \times 1,000}$ and a normalized random vector~$\bm{b}$ with i.i.d.\ Gaussian entries.
We observe that the computed error decreases for $n$ between $25$ and $35$, during which the floating-point error bounds are effectively controlled. However, for $n$ between $35$ and $45$, the error no longer decreases because the combined floating-point error bound exceeds the scalar bound. In particular, for $n = 45$, where the floating-point error bounds fully dominate, the computed error exceeds its scalar bound as a consequence of this numerical instability.

\begin{figure}
    \centering
    \begin{subfigure}{0.32\textwidth}
        \centering
        \includegraphics[width=1.0\linewidth]{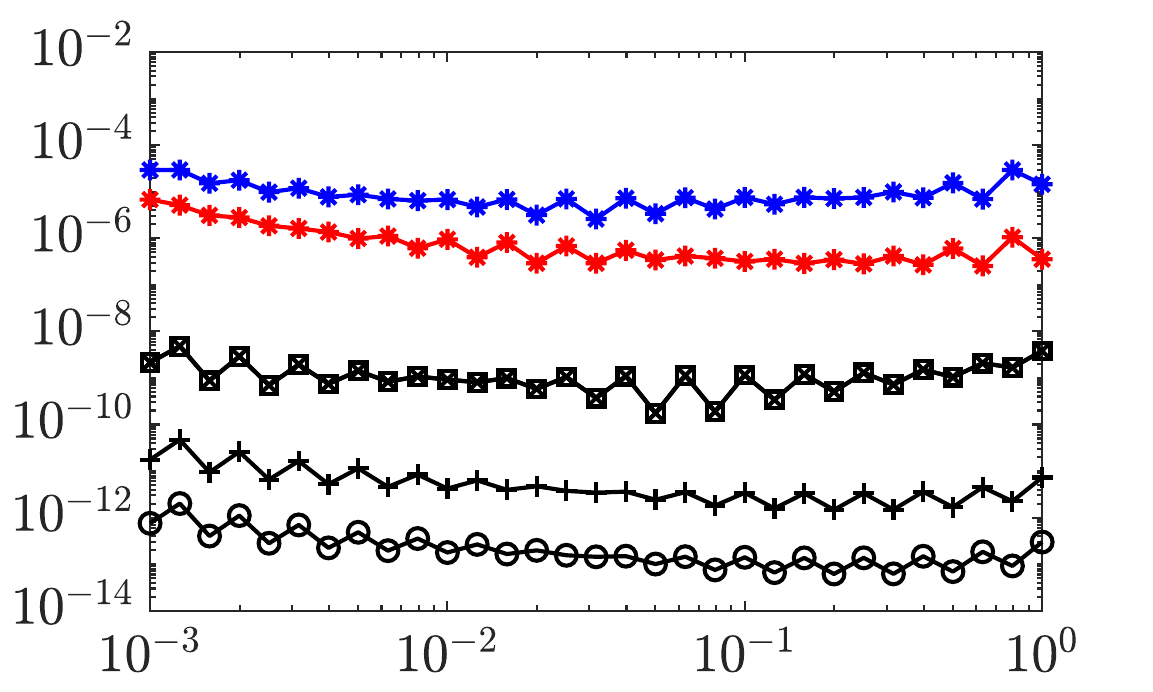}
        \caption{$n=25$}
    \end{subfigure}
    \hfill
    \begin{subfigure}{0.32\textwidth}
        \centering
        \includegraphics[width=1.0\linewidth]{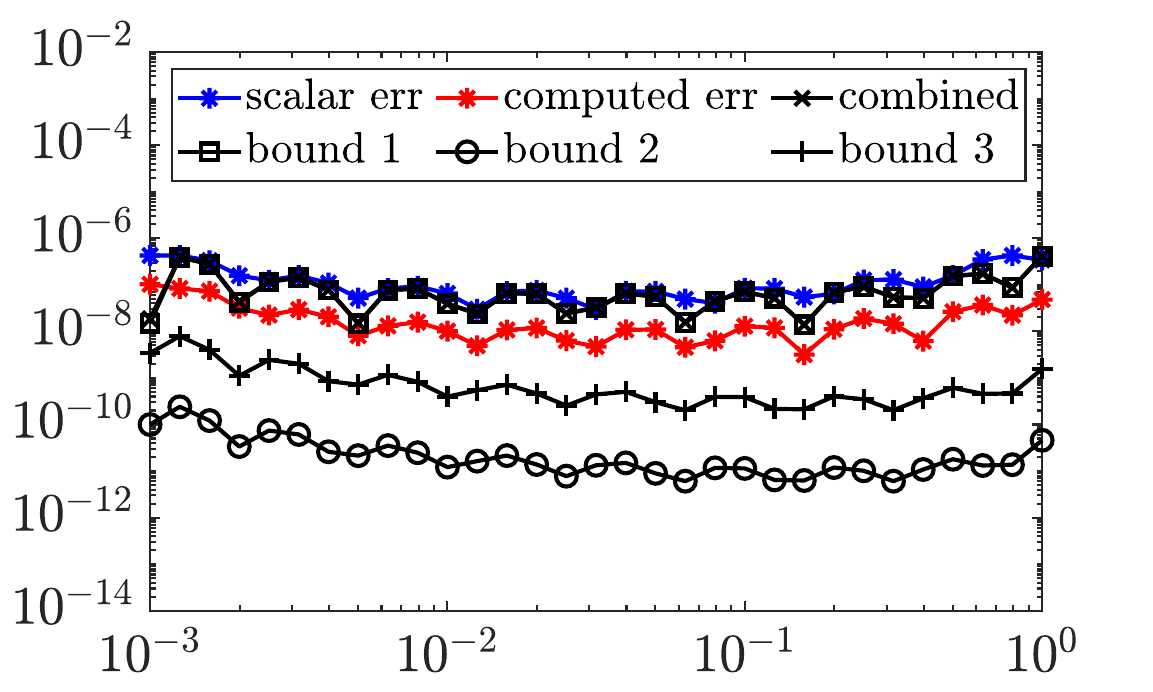}
        \caption{$n=35$}
    \end{subfigure}
    \hfill
    \begin{subfigure}{0.32\textwidth}
        \centering
        \includegraphics[width=1.0\linewidth]{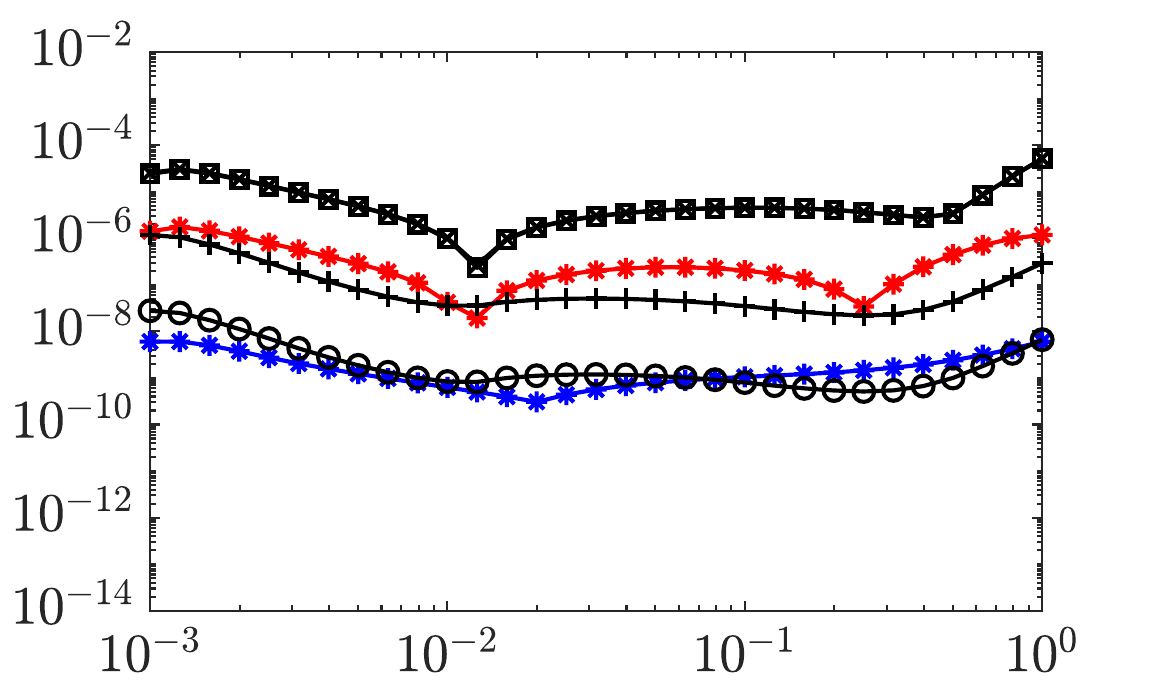}
        \caption{$n=45$}
    \end{subfigure}
    \caption{\emph{The computed error (red), the scalar bound (blue), and the floating-point error bounds derived in section~\ref{error_anal} (black) over the time interval $T=[10^{-3},1]$ for degrees $n=25,\, 35,\, 45$. We observe that the scalar error bound uniformly decreases as $n$ is increased, while the floating-point error increases.}}
    \label{Stab_anal_plot}
\end{figure}

\subsection{TEM problem}\label{tem}

An important application of time-uniform approximation of the matrix exponential arising in geophysics with the transient electromagnetic method (TEM). Recall from  Figure~\ref{three_types} that  $21$ real Zolotarev poles achieved about the same level of scalar time-uniform error as  $18$ RKFIT complex poles. The question then arises how factorizing and solving 21 real (positive definite) linear systems compares to solving $9$ complex-shifted linear systems. Figure~\ref{timing} compares the computation  time for solving these systems $(K-\sigma_i M)\bm{x}_i = \bm{q}$. Here,  both $K$ and $M$ are real sparse finite element matrices of size $N = 181,302$, corresponding to the \texttt{tem181302} electromagnetic problem from~\cite{borner2015three}; also available in the SuiteSparse Matrix Collection~\cite{davis2011university}. The real Zolotarev  shifts incur an average of $3.98$s per system, totaling $83.51$s, while the RKFIT complex shifts require an average of $34.67$s per system, totaling $311.99$s. So, in this case, the sequential linear solve speedup obtained with the Zolotarev approach is about 3.7-fold. If all linear solves are done in parallel, the parallel speedup with the Zolotarev approach is even 8.7-fold.

\begin{figure}
    \centering
    \begin{subfigure}[b]{0.48\textwidth}
        \hspace*{-3mm}\includegraphics[width=1.0\textwidth]{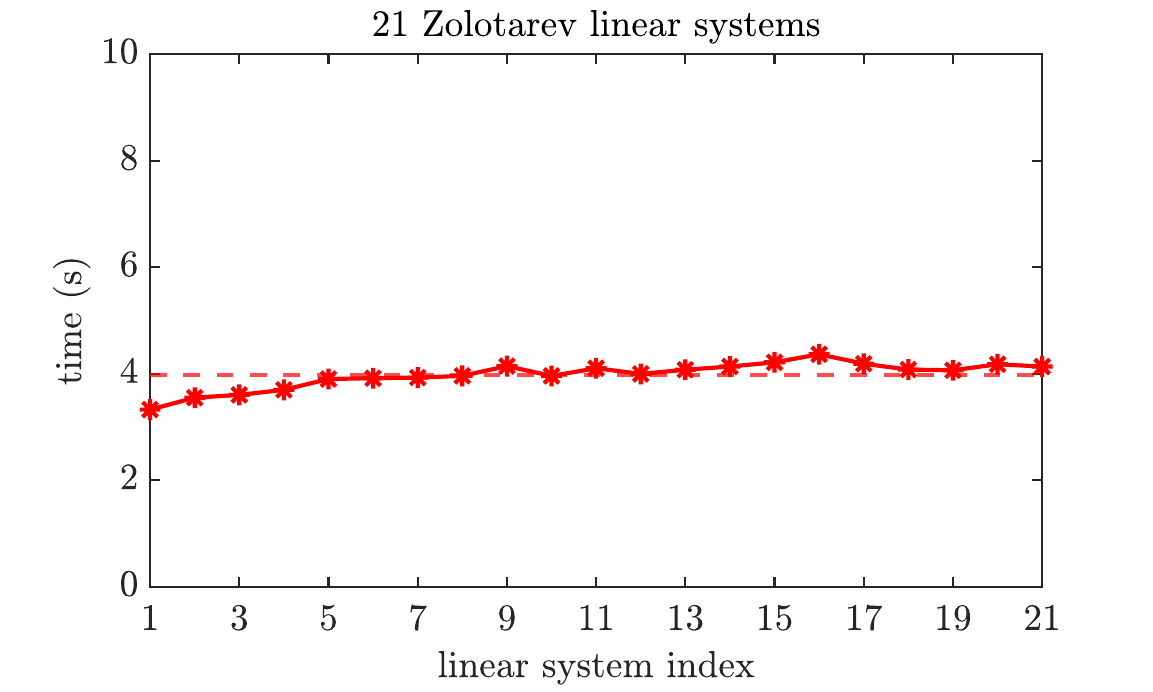}
    \end{subfigure}
    \begin{subfigure}[b]{0.48\textwidth}
        \includegraphics[width=1.0\textwidth]{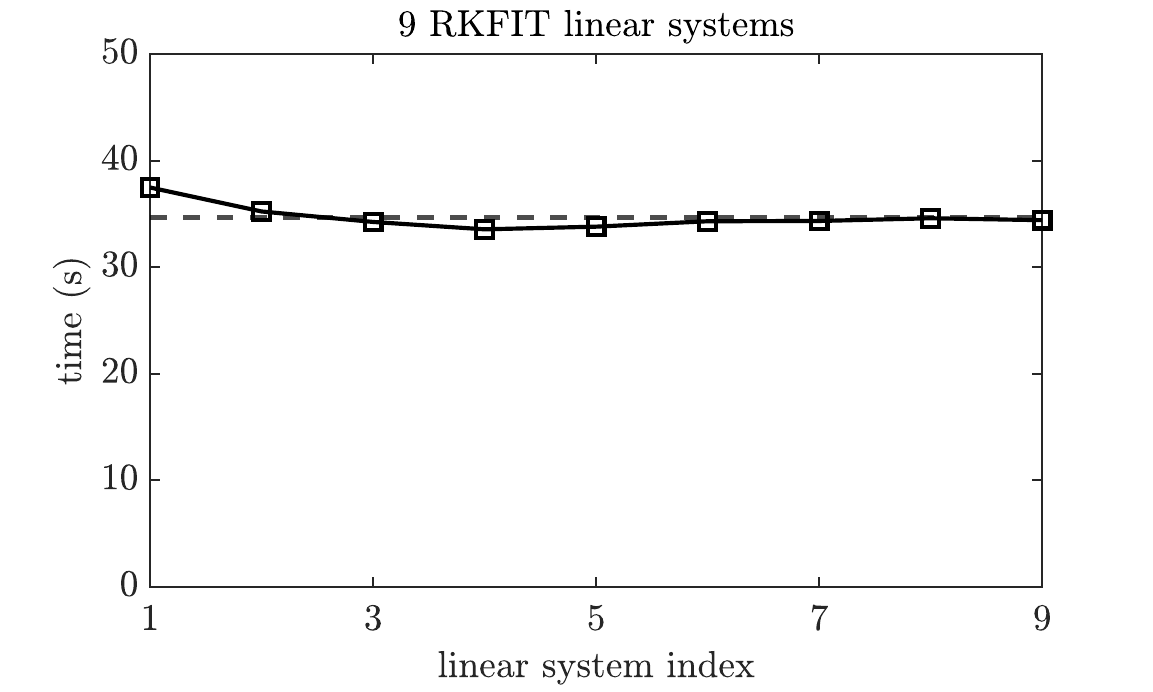}
    \end{subfigure}
    
    \caption{\textit{Time required to solve each linear system using $n=21$ real Zolotarev shifts (left) compared to 9~RKFIT complex shifts (right). The dashed lines indicate the average time per system.}}
    \label{timing}
\end{figure}

\begin{figure}
    \centering
    \begin{subfigure}[b]{0.48\textwidth}
        \hspace*{-3mm}\includegraphics[width=1.0\textwidth]{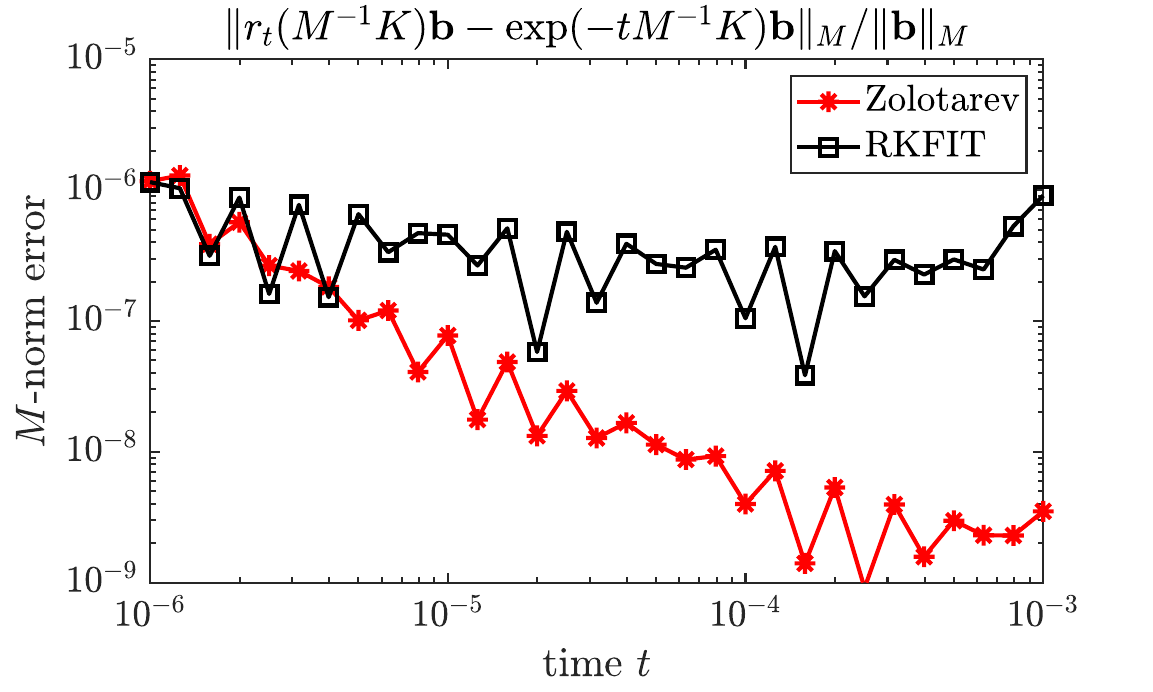}
    \end{subfigure}
    \begin{subfigure}[b]{0.48\textwidth}
        \includegraphics[width=1.0\textwidth]{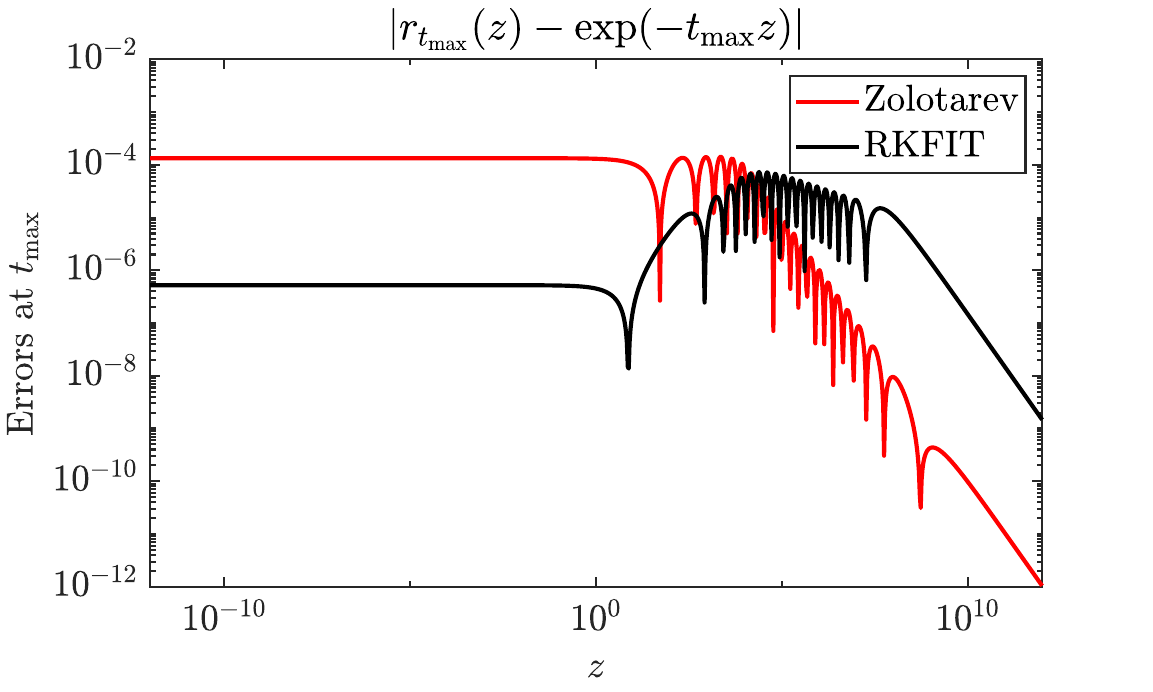}
    \end{subfigure}
    \caption{\emph{Left: The relative $M$-norm error of the Zolotarev and RKFIT approximants for the \texttt{tem181302} problem. Right: The error curves of the Zolotarev and RKFIT approximants at $t_{\max}$.} }
    \label{Mnorm_err}
\end{figure}

As a generalization of the relative $2$-norm error in~\eqref{t_err_bnd}, the left plot of Figure~\ref{Mnorm_err} shows the relative $M$-norm error 
\begin{align}
 \mathrm{rel.\ err} = \frac{\left\|r_t(M^{-1}K)\bm{b}-\exp(-tM^{-1}K)\bm{b}\right\|_M}{\left\|\bm{b}\right\|_M} \label{rel_err}
\end{align}
at each time $t$, where we compute each $\exp(-tM^{-1}K)\bm{b}$ using the Carath{\'e}o\-dory--Fej{\'e}r complex-pole rational approximant with degree $\widehat{n} = 13$. Applying~\cite[Theorem~3.2]{borner2015three} and using the fact that $M^{-1}K$ is similar to a symmetric positive semidefinite matrix, this is also bounded above by the scalar error
\begin{equation}
\mathrm{rel.\ err} \leq \max_{z\geq 0}\left|r_{t,n}(z)-\exp(-tz)\right|. \label{rel_err_bnd}
\end{equation}
In this example, we use a different time interval $T = [10^{-6}, 10^{-3}]$, corresponding to the time scale of our TEM problem. However, this time range gives the same scalar error bound~\eqref{rel_err_bnd} as the previous time interval $[10^{-3}, 1]$, since the time ratio $\tau:=t_{\max}/t_{\min}$ remains the same.

We observe in Figure~\ref{Mnorm_err} (left)  that the relative $M$-norm error with the Zolotarev poles is at least two orders of magnitude below that of using the RKFIT poles for late time points. This can be explained by the fact that the eigenvalues of $M^{-1}K$ are located mainly in the region where the scalar error of the RKFIT approximant is larger than that of the Zolotarev approximant for large values of $t$. The right plot of Figure~\ref{Mnorm_err} shows the scalar error functions of the two approximants at the final time point $t_{\max}$, from which we can see that the error of the Zolotarev approximant decays to zero, filtering out the large eigenvalues of $M^{-1}K$ more quickly.

\subsection{Total error bound}\label{teb}

The bounds~\eqref{t_err_bnd} and~\eqref{rel_err_bnd} assume that the partial fraction is evaluated exactly. In practice, however, the floating-point error must be taken into account once the error incurred in evaluating the partial fraction becomes comparable to the scalar approximation error. We use the term \emph{total error bound} to refer to the sum of the scalar approximation bound (the right-hand side of~\eqref{t_err_bnd} or~\eqref{rel_err_bnd}) and the floating-point error bound (the sum of the bounds on $E_1,E_2,E_3$ derived in   section~\ref{error_anal}).

Figure~\ref{fig:total_error} shows the total error bound for degrees $n=1,2,\ldots,60$ and time ratios~$\tau$ ranging from $10^1$ to $10^4$. In this experiment, we make one slight modification: the optimal pole interval is chosen by minimizing an augmented objective function consisting of the discrete time-uniform error function~\eqref{minimize} together with the floating-point error bound on $E_1$ from Theorem~\ref{err1-bnd}, where the residual norm of each shifted linear system is uniformly replaced by $10^{-15}$, a reasonable approximation for direct solvers with double-precision. The bounds on $E_2$ and $E_3$, given in Theorems~\ref{round_err_resid} and~\ref{err2_bnd}, are not included in the augmented objective function because they are typically negligible compared to $E_1$ due to the unit round-off factors involved. Moreover, both terms depend on the norms of the computed solution vectors, for which no uniform estimator is available. The matrix and vector used here are the same as in section~\ref{err_anal_exper}.

Figure~\ref{fig:total_error} shows that, for all time ratios, the total error bound initially decays at essentially the same rate as the scalar time-uniform error in Figure~\ref{tue_all_T}. This is because the floating-point error is negligible for small degrees. As the degree increases, however, the total error bound ceases to decrease, eventually stagnating at approximately $10^{-6}$ for all time ratios. This demonstrates that the proposed uniform rational approximation of the matrix exponential guarantees at least six digits of accuracy, regardless of the length of the time interval.

We emphasize that the total error bound is only an \emph{upper} bound and may be conservative in practice. Nevertheless, it provides a useful guide for exploiting parallelism with \emph{direct solvers}. For example, numerical experiments show that, for a time ratio of $10^3$, the norm of residues can grow as large as $O(10^9)$ for degree $n=30.$ Inspecting the bound on $E_1$ in Theorem~\ref{err1-bnd} then suggests that  single-precision accuracy (about $O(10^{-8})$) in the final result can be achieved by solving the shifted linear systems with direct methods in double-precision arithmetic.

\begin{figure}
    \centering
    \includegraphics[width=0.7\linewidth]{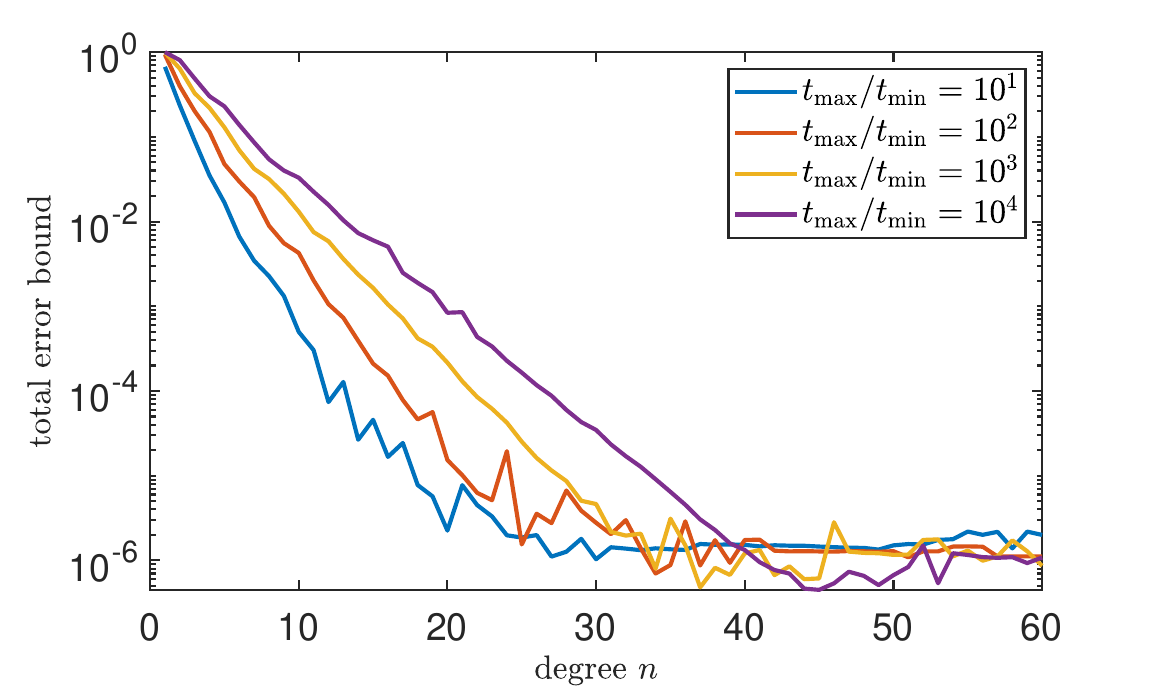}
    \caption{\emph{Total error bound for various degrees $n$ and time ratios $t_{\max}/t_{\min}$. Each data is taken from the maximum in the time interval.} }
    \label{fig:total_error}
\end{figure}

\section{Conclusions}\label{concl}

We have proposed two new approaches for the uniform-in-time approximation of the matrix exponential with shared real poles. The first approach considers the case of concentrated poles, for which we derive an explicit formula for the optimal pole when a suitable weight function is used. The second approach addresses the case of distinct poles, for which we provide a simple Zolotarev-based algorithm to efficiently select near-optimal poles.

Our floating-point error analysis shows that the partial fraction form of the Zolotarev approximants can be evaluated accurately when the shifted linear systems are solved using direct methods. The analysis demonstrates that 
time-uniform approximants can be evaluated in parallel with at least six digits of accuracy using double-precision arithmetic, over a wide range of time intervals. This analysis is based on upper bounds which may be quite pessimistic. In our experience, it is reasonable that 
\emph{with double-precision direct solvers we can expect at least single-precision accuracy in the solutions computed in parallel.} 

We believe that this is an important result as it provides retrospective justification of the common practice (e.g., in 3D geophysical simulations) of using double-precision direct solvers to compute quantities that are likely subject to much higher levels of discretization errors. We can now argue that this supposedly over-accurate solution of the linear systems actually enables a high degree of parallelization on the partial fraction level.

\section*{Acknowledgments}
S.\,G. acknowledges funding from the UK's Engineering and Physical Sciences Research Council (EPSRC grant EP/Z533786/1) and the Royal Society (RS Industry Fellowship IF/R1/231032). S.\,S. acknowledges a  Dean's Doctoral Scholarship from the University of Manchester.

\bibliographystyle{siamplain}
\bibliography{references}
\end{document}